\let\mathcal\mathscr
\def\llra{\hbox to 10mm{\rightarrowfill}}
\def\lllra{\hbox to 15mm{\rightarrowfill}}
\def\PA{{\widehat A}}
\def\PB{{\widehat B}}
\def\phi{{\varphi}}
\def\cF{\mathcal{F}}
\def\cO{\mathcal{O}}
\def\cM{\mathcal{M}}
\def\cQ{\mathcal{Q}}
\def\cV{\mathcal{V}}
\let\tilde\widetilde
\DeclareMathOperator{\rank}{rank}
\DeclareMathOperator{\codim}{codim}
\DeclareMathOperator{\Pic}{Pic}
\newtheorem{lemm}{Lemma}[section]
\newtheorem{theo}[lemm]{Theorem}
\newtheorem{prop}[lemm]{Proposition}
\newtheorem*{conj*}{Conjecture}
\theoremstyle{definition}
\newtheorem{rema}[lemm]{Remark}
\newtheorem{exam}[lemm]{Example}
\newtheorem{qu}[lemm]{Question}
\theoremstyle{remark}
\newtheorem*{remark*}{Remark}
\newtheorem*{note*}{Note}
\begin{document}
\title[M-regular Decompositions]{M-regular Decompositions for pushforwards of pluricanonical bundles of pairs to abelian varieties}
\author{Zhi Jiang}
\address{Shanghai center for mathematical sciences, Xingjiangwan campus, Fudan University, Shanghai 200438, P. R. China}
\email{zhijiang@fudan.edu.cn}
 \thanks{The author is partially supported by NSFC grants No. 11871155 and No. 11731004.}
\maketitle
\begin{abstract}
We extend the so called Chen-Jiang decompositions for pushforwards of pluricanocanical bundles to abelian varieties to the setting of klt pairs. We also provide a geometric application of this decomposition.
\end{abstract}
\section{Introduction}
Throughout this paper, we work over the complex number field $\mathbb C$.
Recall the following result, which is very useful in the study of irregular varieties.
\begin{theo}\label{CJ-form}Let $X$ be a smooth projective variety and $f: X\rightarrow A$ be a morphism from $X$ to an abelian variety. Then
\begin{itemize}
\item[(1)] for any $i\geq 0$, $$R^if_*\omega_X\simeq \bigoplus_{p_B: A\rightarrow B}\bigoplus_k p_{B}^*\cF_{B, k}\otimes Q_{B,k},$$ where $p_B$ are surjective morphisms between abelian varieties with connected fibers, $\cF_{B, k}$ are M-regular sheaves on $B$, and $Q_{B,k}$ are torsion line bundles;
\item[(2)] there exists a quotient of abelian varieties $p_0: A\rightarrow B_0$ such that for any $i\geq 2$,
$$f_*\omega_X^i=\bigoplus_kp_0^*\cF_{k}\otimes Q_{k},$$ where $\cF_k$ are M-regular sheaves on $B_0$ and $Q_k$ are torsion line bundles.
\end{itemize}
\end{theo}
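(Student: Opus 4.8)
The plan is to derive both statements from generic vanishing theory combined with the decomposition theorem for polarizable Hodge modules. The two basic inputs are the following. First, each higher direct image $R^if_*\omega_X$ is a generic vanishing sheaf (GV-sheaf) on $A$: its cohomological support loci $V^j:=\{P\in\widehat A: H^j(A,R^if_*\omega_X\ot P)\neq 0\}$ satisfy $\codim_{\widehat A}V^j\geq j$. I would obtain this from Hacon's generic vanishing theorem, either through relative Kodaira--Nakano vanishing and the Fourier--Mukai transform, or more functorially from the fact that $R^if_*\omega_X$ underlies a polarizable Hodge module (Popa--Schnell). Second, by the Green--Lazarsfeld--Simpson structure theorem, as extended to this generality via the theory of cohomology jump loci of Hodge modules, each $V^j$ is a finite union of torsion translates $P_0+\widehat B$ of abelian subvarieties $\widehat B\subseteq\widehat A$, and each such $\widehat B$ is dual to a quotient $p_B:A\to B$.

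For (1), if $\cF:=R^if_*\omega_X$ is M-regular there is nothing to prove, with $B=A$. In general the positive-dimensional components of the loci $V^j$ measure the failure of M-regularity, each component $P_0+\widehat B$ being dual to a quotient $p_B:A\to B$, and the crux is to convert this stratification into an honest direct sum $\cF\isom\bigoplus p_B^*\cF_B\ot Q_B$. Generic vanishing alone produces only a filtration; to split it into a direct sum I would invoke Saito's decomposition theorem, applied to the pure Hodge module whose lowest graded piece pushes forward to $\cF$: it decomposes into simple summands according to strict support, and each summand descends, along the quotient $p_B$ attached to the corresponding component of the support loci, to a sheaf $\cF_B$ on $B$ twisted by a torsion line bundle $Q_B$. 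One then checks that each $\cF_B$ is M-regular on $B$ — equivalently that no positive-dimensional support locus survives after descent — iterating the construction on the lower-dimensional quotients if necessary. This passage from a support-locus stratification to a genuine direct-sum decomposition is the principal difficulty, and it is exactly where the Hodge-theoretic origin of $\cF$, and not merely its generic vanishing, is indispensable.

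For (2), with $i\geq 2$, two features are new: one works with the pluricanonical sheaf $f_*\omega_X^i$ rather than a higher direct image of $\omega_X$, and a single quotient $p_0:A\to B_0$ must serve all $i\geq 2$ simultaneously. The generic vanishing and the decomposition for each fixed $f_*\omega_X^m$ I would reduce to the canonical case of (1) by an auxiliary construction adapted to the $m$-canonical system — typically a suitable branched cyclic cover $Y\to X$ whose resolution $g:Y\to A$ realizes $f_*\omega_X^m$ as a direct summand of $g_*\omega_Y$; this is also the point at which, in the klt-pair setting the paper develops, one instead works directly with log-canonical sheaves $f_*(\omega_X\ot\cO_X(\Delta))$. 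The uniformity of $B_0$ is then the genuinely new difficulty. It should hold because the positive-dimensional components of the support loci reflect the fibration geometry of $f$ rather than the degree $m$ — the positivity carried by the factor $\omega_X^{m-1}$ with $m-1\geq 1$ already saturates the M-regular directions — so that the abelian subvarieties $\widehat B$ arising for the various $m\geq 2$ all lie in one maximal subvariety dual to $B_0$. Proving this stabilization uniformly in $m$, rather than degree by degree, is the main obstacle in (2).
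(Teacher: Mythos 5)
First, a point of calibration: the paper does not prove Theorem \ref{CJ-form} at all --- it is quoted as background, with part (1) attributed to \cite{CJ} and \cite{PPS} and part (2) to \cite{LPS} --- so there is no internal proof to compare yours against. Judged on its own terms, your proposal is an accurate map of the architecture of those proofs (generic vanishing, torsion-translate structure of the jump loci, decomposition by strict support of the pushforward Hodge module), but it has two genuine gaps, and they are exactly the two steps you yourself flag as ``the principal difficulty'' and ``the main obstacle.'' For (1), the passage from the stratification by support loci to an actual direct-sum decomposition is not achieved by Saito's decomposition theorem alone: decomposing the pushforward Hodge module into simple summands with strict support says nothing yet about why a given simple summand should descend along a quotient $p_B\colon A\to B$ up to a torsion twist. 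The missing ingredient is the classification result of \cite{PPS}: a simple polarizable Hodge module on an abelian variety whose relevant graded piece has Euler characteristic zero is, up to tensoring by a torsion character, pulled back from a proper quotient; one then iterates on the lower-dimensional quotient. Your sketch asserts the descent but supplies no mechanism for it, and in particular never explains why the twisting line bundles $Q_{B,k}$ are torsion (this requires the arithmetic, Simpson-type structure of the jump loci of Hodge modules, not merely their linearity).

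For (2), the uniformity of $B_0$ in $m$ is not a stabilization phenomenon to be proved degree by degree; in \cite{LPS} (and in the paper's own Proposition \ref{pluricanonical}, which generalizes it to klt pairs and shows what the argument must look like) the quotient $B_0$ is named explicitly: a general fiber $F$ of the Iitaka fibration of $X$ satisfies $\kappa(F)=0$, hence $f(F)$ is a translate of a fixed abelian subvariety $K\subset A$, and $B_0=A/K$. The substance of the proof is then (a) that $V^0(f_*\omega_X^m)$ is a union of torsion translates of $\Pic^0(B_0)$, and (b) that $p_{0*}(f_*\omega_X^m\otimes Q)$ is $IT^0$ on $B_0$ for each such torsion point $Q$, which is proved by writing $mK_X\sim_{\mathbb Q}K_X+\Delta'+(\hbox{fixed part})+(\hbox{pullback of an ample class from }B_0)$ and applying Koll\'ar-type vanishing --- this is precisely the role of the paper's Lemma \ref{klt}. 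Your heuristic that ``$\omega_X^{m-1}$ saturates the M-regular directions'' points in the right direction but does not identify $B_0$, and without that identification neither the existence of the decomposition for a single $m$ nor its independence of $m$ follows. As written, the proposal is a correct outline of where the theorem's content lies rather than a proof of it.
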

\begin{rema}
\begin{itemize}
\item[(1)]The first statement was first proved in \cite{CJ} when $f$ is generically finite onto its image and the general form was proved in \cite{PPS} and the decomposition was referred as Chen-Jiang decomposition.  We will also adopt this terminology in this note. The formula also holds when $X$ is a compact K\"ahler manifold and $f: X\rightarrow A$ is a morphism from $X$ to a complex torus (see \cite{PPS}).
\item[(2)] The second statement was proved in \cite{LPS}.
\end{itemize}
\end{rema}

Both statements are quite powerful and it is then natural to ask if they holds for (pluri-)canonical bundles of pairs. The main result of this note is to extend both statements to the setting of klt pairs.
\begin{theo}Let $(X, \Delta)$ be a klt pair and let $f: X\rightarrow A$ be a  morphism from $X$ to an abelian variety $A$.
\begin{itemize}
\item[(1)]Suppose that a Cartier divisor $D\sim_{\mathbb Q} K_X+\Delta$. Then $R^jf_*\mathcal O_X(D)$ admits Chen-Jiang decomposition for each $j\geq 0$.
\item[(2)]Suppose that a Cartier divisor $D\sim_{\mathbb Q}m(K_X+\Delta)$  with non-negative Kodaira dimension for some $m\geq 2$. Then there exists a quotient between abelian varieties with connected fibers $p: A\rightarrow B$, $IT^0$ sheaves $\cF_i$ on $B$, and $Q_i\in \Pic^0(A)$ torsion line bundles, for $1\leq i\leq N$; such that $$f_*\mathcal O_X(D)=\bigoplus_{1\leq i\leq N}p^*\cF_i\otimes Q_i.$$
    \end{itemize}
\end{theo}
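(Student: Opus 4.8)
The plan is to reduce both statements to the smooth case recorded in Theorem~\ref{CJ-form} by means of a cyclic covering that absorbs the boundary $\Delta$, and then to descend the resulting decomposition to $X$ through the eigensheaf decomposition under the Galois action. Concretely, I would choose a positive integer $m$ so that $m\Delta$ is integral and $mD\sim m(K_X+\Delta)$; then the line bundle $L:=\cO_X(D-K_X)$ satisfies $L^{\otimes m}\simeq\cO_X(m\Delta)$, and the tautological section cutting out $m\Delta$ determines a cyclic cover $\pi\colon Z\to X$ of degree $m$ branched along $m\Delta$, set up so that $\Delta$ becomes part of the ramification. Since $(X,\Delta)$ is klt, the pair defining $Z$ has mild singularities; after a log resolution $\mu\colon\wZ\to Z$ one obtains a smooth projective variety with a morphism $g:=f\circ\pi\circ\mu\colon\wZ\to A$, and the $\mathbf{Z}/m$-action on $Z$ lifts so that each $R^jg_*\omega_{\wZ}$ carries a $\mathbf{Z}/m$-grading by $A$-equivariant automorphisms. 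The first task is to identify, using the klt hypothesis to guarantee that the multiplier ideals that arise are trivial, the graded piece in which $\cO_X(D)$ lives: combining the eigenspace decomposition of $\pi_*\mu_*\omega_{\wZ}$ with Grauert--Riemenschneider vanishing $R^{>0}\mu_*\omega_{\wZ}=0$ shows that $R^jf_*\cO_X(D)$ is precisely one isotypic summand of $R^jg_*\omega_{\wZ}$.

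For part (1), I would then invoke Theorem~\ref{CJ-form}(1) for $g\colon\wZ\to A$: each $R^jg_*\omega_{\wZ}$ admits a Chen-Jiang decomposition. Because the $\mathbf{Z}/m$-action is $A$-equivariant and the pieces of the decomposition are governed by the cohomology support loci $V^i$, which are preserved by automorphisms over $A$, it remains to establish that the class of Chen-Jiang decomposable sheaves is stable under passing to direct summands. This stability is the technical heart of part (1); I expect to prove it by combining the uniqueness features of the decomposition with the observation that the idempotent projecting onto $R^jf_*\cO_X(D)$ respects each isotypic factor. Granting this, part (1) follows at once.

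For part (2), the covering trick is even more favorable: since $\pi^*(K_X+\Delta)$ differs from $K_Z$ only by a divisor supported on the ramification, the Cartier divisor $D\sim_{\mathbb Q}m(K_X+\Delta)$ pulls back, on a suitable model, to a genuine pluricanonical divisor $\sim_{\mathbb Q}mK_{\wZ}$ up to an $f$-exceptional correction. As $m\geq 2$, I would apply Theorem~\ref{CJ-form}(2) to $g\colon\wZ\to A$, which provides a \emph{single} quotient $p_0\colon A\to B_0$ together with M-regular sheaves $\cF_k$ and torsion line bundles $Q_k$ such that $g_*\omega_{\wZ}^{m}=\bigoplus_k p_0^*\cF_k\otimes Q_k$. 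Descending to the relevant eigensheaf then yields a single quotient $p\colon A\to B$ and a decomposition $f_*\cO_X(D)=\bigoplus_i p^*\cF_i\otimes Q_i$ of the required shape.

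The main obstacle, and the point where the hypotheses $m\geq 2$ and $\kappa(K_X+\Delta)\geq 0$ are used essentially, is the upgrade from \emph{M-regular} to $IT^0$ for the sheaves $\cF_i$ on $B$. Here I would argue that, for the maximal quotient $B$ extracting all the abelian directions, the induced fibration has fibers on which the $m$-pluricanonical system is big; this positivity, together with the nonnegativity of the Kodaira dimension, should force the support loci $V^i(\cF_i)$ for $i>0$ to be empty rather than merely of high codimension, which is exactly the $IT^0$ condition. Verifying that this positivity survives the cyclic covering and the descent --- in particular that no spurious jumping loci are introduced by the resolution $\mu$ or by the boundary $\Delta$ --- is the most delicate step, and is where both the klt assumption and the strict inequality $m\geq 2$ enter in a genuinely necessary way.
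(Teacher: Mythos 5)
Your argument for part (1) is essentially the paper's: reduce to a log smooth klt pair, take the cyclic cover determined by $N(D-K_X)\sim N\Delta$, resolve it to get $\pi\colon Y\to X$, observe via Koll\'ar's eigensheaf formula that $\cO_X(D)$ is a direct summand of $\pi_*\omega_Y$, apply Theorem~\ref{CJ-form}(1) to $R^j(f\circ\pi)_*\omega_Y$, and conclude by the stability of Chen--Jiang decompositions under direct summands. The one ingredient you flag as "the technical heart" --- that a direct summand of a sheaf admitting a Chen--Jiang decomposition again admits one --- is exactly what the paper imports from \cite[Proposition 4.6]{LPS}, so part (1) is in order.

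Part (2), however, has a genuine gap: the covering trick is not "even more favorable" there --- it is precisely where it breaks down. For $m=1$ the eigensheaf decomposition of $\pi_*\omega_Y$ survives the resolution $\mu$ because $R^{>0}\mu_*\omega_{\wZ}=0$ and $\mu_*\omega_{\wZ}$ computes the correct reflexive sheaf; for $m\geq 2$ the pushforward $\mu_*\omega_{\wZ}^{\otimes m}$ differs from $\cO_Z(mK_Z)$ by discrepancy corrections, and $\cO_X(D)$ with $D\sim_{\mathbb Q}m(K_X+\Delta)$ is in general \emph{not} a direct summand of $(\pi\circ\mu)_*\omega_{\wZ}^{\otimes m}$. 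Moreover, even granting such a summand, Theorem~\ref{CJ-form}(2) only yields M-regular pieces over a quotient $B_0$ governed by the Iitaka fibration of $\wZ$, whereas the statement requires $IT^0$ pieces over the quotient $A\to B=A/K$ determined by the Iitaka fibration of $(X,D)$; your proposed upgrade ("the positivity should force the $V^i$ to be empty") is not an argument. The paper proves part (2) by an entirely different route: Lemma~\ref{klt} shows $f_*\cO_X(D)$ is $IT^0$ when the Iitaka model of $D$ dominates the target, by subtracting a carefully chosen divisor $V$ from $D$ so that Koll\'ar vanishing (Theorem~\ref{Kollar}) applies to $D-V$ and then recovering $f_*\cO_X(D)=f_*\cO_X(D-V)$ via the GV property and the maximality of $h^0$; Proposition~\ref{pluricanonical} then assembles the decomposition over $B=A/K$, using Theorem~\ref{metric} (Hermitian flatness of pushforwards with $c_1=0$) to show that the restriction of $f_*\cO_X(D)$ to a general fiber $A_y$ of $p$ is a direct sum of torsion line bundles, which is what makes the comparison map $\phi$ injective. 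None of these steps is reachable by descending a decomposition of $g_*\omega_{\wZ}^{\otimes m}$, so part (2) of your proposal would need to be replaced rather than repaired.
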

\begin{rema}
After this note was finished, the author was informed by Fanjun Meng that he proved essentially the same result by some different method. The author likes to thank him  by  sharing the preprint \cite{M}.
\end{rema}

\section{Preliminaries}
\subsection{Notations}
Let $f: X\rightarrow A$ be a morphism from a projective variety to an abelian variety and let $\cQ$ be a coherent sheaf on $X$. For any $i\geq 0$ and $j>0$, we can define $$V^i_j(\cQ, f):=\{P\in \Pic^0(A) \mid H^i(X, \cQ\otimes f^*P)\geq j\}.$$ By semicontinuity, $V^i_j(\cQ, f)$ is a closed subset of $\Pic^0(A)$ for any $i\geq 0$ and $j>0$. When $\cF$ is a coherent  sheaf on $A$, we   denote by $$V^i(\cF):=\{P\in \Pic^0(A) \mid H^i(A, \cF\otimes P)> 0\}$$ the $i$-th cohomological support loci of $\cF$ for $i\geq 0$.

The cohomological support loci of a coherent sheaf $\cF$ on $A$ is closely related to the positivity of the sheaf $\cF$.
We say that $\cF$  is \textit{IT$^0$} if $V^i(\cF)=\emptyset$ for $i>0$. We say that $\cF$ is \textit{M-regular} (resp. \textit{GV}) if $\codim_{\Pic^0(A)}V^i(\cF)>i$ (resp. $\codim_{\Pic^0(A)}V^i(\cF)\geq i$) for $i>0$.

M-regular sheaves were introduced in \cite{PP} and GV sheaves were studied in \cite{H} and \cite{PP2}. Hacon  proved in \cite{H} that for any morphism $f: X\rightarrow A$ from a smooth projective variety to an abelian variety,
$R^jf_*\omega_X$ is GV for any $j\geq 0$. Hacon also proved that if $\cF\neq 0$ is a GV sheaf on an abelian variety $A$, then $V^0(\cF)\neq \emptyset$ (see \cite{H}). Pareschi and Popa showed that if $\cF\neq 0$ is M-regular, then $V^0(\cF)=\Pic^0(A)$ (see \cite{PP}).

 We will say that a  sheaf $\cF$ on $A$ admits \textit{Chen-Jiang decomposition} if there exists    surjective morphisms between abelian varieties $q_i: A\rightarrow B_i$ with connected fibers, M-regular sheaves $\cF_i$ on $B_i$, and torsion line bundles $Q_i\in \Pic^0(A)$, for  $1\leq i\leq N$, such that $$\cF\simeq \bigoplus_{1\leq i\leq N}q_i^*\cF_i\otimes Q_i.$$

 It is straightforward to see that each direct summand $q_i^*\cF_i\otimes Q_i$ is actually GV on $A$. Thus if a sheaf admits Chen-Jiang decomposition, it is GV. By \cite{D} and \cite{PP3} we know that   GV sheaves are nef and  M-regular sheaves are ample. Hence if a  coherent sheaf $\cF$ on $A$ admits Chen-Jiang decomposition,   it is  semi-ample.

 We allow $q_i$ to be the identity morphism $A\rightarrow A$ or the trivial morphism from $A$ to $\mathrm{Spec}( \mathbb C)$. Thus by definition, a M-regular sheaf on $A$ admits Chen-Jiang decomposition and a direct sum of torsion line bundles also admits Chen-Jiang decomposition.

\subsection{Some useful results}
Our goal is to extend Theorem \ref{CJ-form} to singular settings.

In \cite[Variant 5.5]{PS}, Popa and Schnell proved that the pushforward of pluricanonical bundles of log canonical pairs are GV.
\begin{theo}\label{PS}
Let $k\geq 1$ be an integer and let $(X, \Delta)$ be a log canonical pair and $f: X\rightarrow A$ be a morphism to an abelian variety. Suppose that $k(K_X+\Delta)$ is Cartier, then $f_*\mathcal O_X(k(K_X+\Delta))$ is GV.
\end{theo}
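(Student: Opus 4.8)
The plan is to reduce the generic vanishing of $\cF:=f_*\cO_X(k(K_X+\Delta))$ to a concrete cohomological vanishing on the source, via the Fourier--Mukai characterization of GV sheaves. Recall (Hacon \cite{H}, Pareschi--Popa \cite{PP}) that a coherent sheaf $\cF$ on $A$ is GV if and only if, for every sufficiently positive ample line bundle $L$ on the dual $\widehat A$, the twisted pullback $\phi_L^*\cF\otimes L$ is IT$^0$ on $\widehat A$, where $\phi_L\colon \widehat A\to \widehat{\widehat A}=A$ is the isogeny attached to $L$. Fixing such an $L$, I would form the fibre product $X'=X\times_A\widehat A$, with projections $\sigma\colon X'\to X$ and $g\colon X'\to\widehat A$. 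Since $\phi_L$ is finite flat and étale, $\sigma$ is étale; hence $(X',\Delta':=\sigma^*\Delta)$ is again log canonical, $k(K_{X'}+\Delta')$ is Cartier, and flat base change gives $\phi_L^*\cF\isom g_*\cO_{X'}(k(K_{X'}+\Delta'))$. It therefore suffices to prove that $g_*\cO_{X'}(k(K_{X'}+\Delta'))\otimes L$ is IT$^0$, i.e. that $H^i\big(\widehat A,\,g_*\cO_{X'}(k(K_{X'}+\Delta'))\otimes L\otimes P\big)=0$ for all $i>0$ and all $P\in\Pic^0(\widehat A)$.

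By the projection formula together with the Leray spectral sequence (and the Kollár-type behaviour of the relevant direct images), this reduces to the vanishing upstairs
$$H^i\big(X',\ \cO_{X'}(k(K_{X'}+\Delta'))\otimes g^*(L\otimes P)\big)=0,\qquad i>0,$$
where $g^*(L\otimes P)$ is nef and semiample, and big along the image of $g$. For $k=1$ I would rewrite the bundle as $K_{X'}+\Delta'+g^*(L\otimes P)$ with $(X',\Delta')$ log canonical; the desired vanishing is then a Kollár--Nadel statement in the log canonical generality (the Ambro--Fujino vanishing package), the boundary $\Delta'$ supplying exactly the multiplier-ideal information the theorem requires. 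This settles the case $k=1$.

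The genuine difficulty is the pluricanonical case $k\geq 2$: writing $k(K_{X'}+\Delta')=K_{X'}+\big[(k-1)(K_{X'}+\Delta')+\Delta'+g^*(L\otimes P)\big]$, the bracketed divisor contains $(k-1)(K_{X'}+\Delta')$, which is not nef in general, so Kawamata--Viehweg vanishing does not apply directly. This is the main obstacle, and I see two ways past it. The first is a covering trick applied directly to $f\colon X\to A$: since $k(K_X+\Delta)$ is Cartier, build a resolution $Y\to X$ of a cyclic (or abelian) cover adapted to the $\mathbb Q$-divisor $\Delta$ so that $\cO_X(k(K_X+\Delta))$ occurs as a direct summand of $h_*\omega_Y$ for $h\colon Y\to X$; then $\cF$ is a direct summand of $(f\circ h)_*\omega_Y$, which is GV by Hacon's theorem (Theorem \ref{CJ-form}(1) with $i=0$), and a direct summand of a GV sheaf is GV. The weakness of this route is that such a cover requires a section of a multiple of $k(K_X+\Delta)$, i.e. some effectivity, which is not assumed, so it does not by itself yield the unconditional statement. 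The second, robust route is the mixed Hodge module argument of Popa--Schnell \cite{PS}: one realizes $f_*\cO_X(k(K_X+\Delta))$ as the lowest nonzero graded piece of the Hodge-filtered de Rham complex of a suitable (twisted) mixed Hodge module on $A$ attached to $f$ and the pair, and then invokes the general theorem that such graded-de-Rham pieces are GV on an abelian variety. Because this argument never uses positivity or effectivity of the pluricanonical bundle, it gives the statement with no hypothesis on the Kodaira dimension, and it is the route I would ultimately carry out.
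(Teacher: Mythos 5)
First, a point of comparison: the paper does not prove Theorem \ref{PS} at all --- it is quoted from Popa--Schnell \cite[Variant 5.5]{PS}, and Remark \ref{rema1} records the mechanism, namely the vanishing theorem \cite[Theorem 1.7]{PS} combined with Hacon's criterion \cite[Theorem 1.2]{H}. Your framework is in fact the same as the one used in \cite{PS}: Hacon's criterion, \'etale base change along the isogeny $\phi_L\colon \widehat A\to A$, and a vanishing statement for $g_*\cO_{X'}(k(K_{X'}+\Delta'))\otimes L\otimes P$ on $\widehat A$. Your treatment of the case $k=1$ is essentially correct: after the standard log resolution reduction, the required vanishing is exactly Ambro--Fujino vanishing (Theorem \ref{KV-Fujino}), applied with $B\sim_{\mathbb R}K_{X'}+\Delta'+g^*(L\otimes P)$ and $L\otimes P$ ample.

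The gap is in the case $k\geq 2$, which you yourself flag as the genuine difficulty. The route you say you ``would ultimately carry out'' --- realizing $f_*\cO_X(k(K_X+\Delta))$ as the lowest graded piece of the Hodge-filtered de Rham complex of a (twisted) mixed Hodge module on $A$, and invoking generic vanishing for such pieces --- does not exist for $k\geq 2$. The mixed Hodge module generic vanishing theory of Popa--Schnell applies to objects of Hodge-theoretic origin, such as $R^jf_*\omega_X$ and graded de Rham pieces of Hodge modules; a pluricanonical direct image with $k\geq 2$ is not such an object, and this is precisely why the pluricanonical case resists purely Hodge-theoretic arguments. (The reference \cite{PS} that you cite for this route does not, in fact, use mixed Hodge modules there.) What \cite{PS} actually does for $k\geq 2$ is, ironically, a fusion of your two ``incomplete'' routes: after the reduction via Hacon's criterion you have the ample twist $L$ at your disposal, so if $g_*\cO_{X'}(D')\neq 0$ (otherwise there is nothing to prove), then $H^0(\widehat A, g_*\cO_{X'}(D')\otimes L^{\otimes l})\neq 0$ for $l\gg 0$, i.e.\ $D'+lg^*L$ is $\mathbb R$-linearly equivalent to an effective divisor $E$. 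One then writes $D'+l'g^*L\sim_{\mathbb R}K_{X'}+\Delta'+\tfrac{k-1}{k}E+g^*(\text{ample})$, passes to a log resolution on which $\Delta'+E$ has SNC support, checks --- this is the Viehweg-style step where the bookkeeping matters --- that the pair can be kept log canonical and that the pushforward of the suitably rounded line bundle is unchanged, and applies Theorem \ref{KV-Fujino}. So the effectivity you found missing in your covering-trick route is exactly what the ample twist supplies, and no hypothesis on the Kodaira dimension of $K_X+\Delta$ is needed, only nonvanishing of the direct image, which is free. As written, your proposal is complete only for $k=1$.
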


\begin{rema}\label{rema1}
Popa and Schnell proved a vanishing result \cite[Theorem 1.7]{PS} for the pushforwards of Cartier divisors which are $\mathbb R$-equivalent to $m(K_X+\Delta)$, which, combining Hacon's criterion (\cite[Theorem 1.2]{H}),  implies Theorem \ref{PS}. Their arguments proved that for any Cartier divisor $D\sim_{\mathbb Q}k(K_X+\Delta)$, $f_*\cO_X(D)$ is GV.
\end{rema}

Shibata \cite{Sh} studied the cohomological support loci of pluricanonical bundles of log canonical pairs.
\begin{theo} \label{shibata}Let $(X, \Delta)$ be a log canonical pair and let $f: X\rightarrow A$ be a morphism from $X$ to an abelian variety $A$.
\begin{itemize}
\item Assume that $D$ is a Cartier divisor such that $D\sim_{\mathbb Q} m(K_X+\Delta)$ for some integer $m\geq 1$, then $V^0_j(\cO_X(D), f) $ is a finite union of torsion translates of abelian subvarieties of $\Pic^0(A)$ for all $j\geq 0$;
\item  If $(X, \Delta)$ is moreover log smooth, then for any Cartier divisor $D\sim_{\mathbb Q}K_X+\Delta$, then $V^i_j(\cO_X(D), f)$ is a finite union of torsion translates of abelian subvarieties of $\Pic^0(A)$ for all $i,j\geq 0$.
\end{itemize}
\end{theo}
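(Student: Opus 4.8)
The plan is to reduce both assertions to the known torsion-translate structure of the cohomology support loci of a \emph{canonical} sheaf, where the geometric ``translate of a subtorus'' is supplied by generic vanishing in the style of Green-Lazarsfeld, while the arithmetic ``torsion'' is supplied by Simpson's theorem on the jumping loci of rank-one local systems.

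For the first statement, in degree $i=0$ the projection formula gives $H^0(X,\cO_X(D)\otimes f^*P)\isom H^0(A,(f_*\cO_X(D))\otimes P)$, so $V^0_j(\cO_X(D),f)=V^0_j(f_*\cO_X(D))$, and $f_*\cO_X(D)$ is GV by Theorem \ref{PS} (in the form of Remark \ref{rema1}). To locate these loci I would pass to a cyclic cover: after clearing denominators in $D\sim_{\mathbb Q}m(K_X+\Delta)$ and in $\Delta$, a suitable cyclic cover $\pi\colon Y\to X$ followed by a log resolution realizes $\cO_X(D)$, up to a torsion twist, as a direct summand of $g_*\omega_Y$ for the induced morphism $g\colon Y\to A$ from a smooth projective $Y$. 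Theorem \ref{CJ-form}(1) then furnishes a Chen-Jiang decomposition $g_*\omega_Y\isom\bigoplus_k q_k^*\cF_k\otimes Q_k$, and since each $q_k$ has connected fibers, $H^0(A,q_k^*\cF_k\otimes Q_k\otimes P)$ vanishes unless $Q_k\otimes P\in q_k^*\Pic^0(B_k)$, where it equals $H^0(B_k,\cF_k\otimes P')$. Thus $V^0_j(g_*\omega_Y)$ is assembled from the sets $-Q_k+q_k^*V^0_{j'}(\cF_k)$, and it remains to know that the jumping loci of the M-regular sheaves $\cF_k$ are finite unions of torsion translates of subtori---precisely Simpson's input---after which intersecting with the translate carrying the $\cO_X(D)$-summand and pushing back along $\pi$ gives the claim for $V^0_j(\cO_X(D),f)$.

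For the second statement the multiplicity is $m=1$, so no cover is needed, but one must control every cohomological degree. Here I would use the log smoothness directly: with $D\sim_{\mathbb Q}K_X+\Delta$, the bundle $\cO_X(D)$ arises as a graded piece of the Hodge filtration of a rank-one meromorphic flat bundle with logarithmic poles and prescribed residues along $\lceil\Delta\rceil$, and for $P\in\Pic^0(A)$ the groups $H^i(X,\cO_X(D)\otimes f^*P)$ are computed, via degeneration of the logarithmic Hodge-to-de Rham spectral sequence, by the hypercohomology of the associated twisted logarithmic de Rham complexes. Simpson's theorem in this logarithmic Hodge-theoretic form then shows that the locus where such hypercohomology jumps is a finite union of torsion translates of abelian subvarieties, simultaneously for all $i$ and $j$.

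I expect the main obstacle to be the \emph{torsion} assertion rather than the mere subtorus structure: linearity of the components follows fairly softly from generic vanishing, but rationality and torsion of the translates is the deep Simpson-type arithmetic input, and in the first statement it must be shown to survive both the cyclic cover and the extraction of the precise summand attached to $\cO_X(D)$---tracking which torsion twist is produced, and maintaining $\Pic^0(A)$-equivariance throughout the log resolution, is the technical heart. A secondary difficulty is that $(X,\Delta)$ is only log canonical, so one must check that the Hodge-module machinery behind Theorem \ref{PS} and the generic vanishing package remains available in this singular, boundary setting.
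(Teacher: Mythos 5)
This statement is not proved in the paper at all: it is quoted verbatim from Shibata \cite{Sh}, so there is no internal argument to compare yours against. Judged on its own terms, your sketch of the second bullet (log smooth, $m=1$, all cohomological degrees) is essentially the right strategy and is close to what is actually done in the literature: one computes $H^i(X,\cO_X(D)\otimes f^*P)$ via twisted logarithmic de Rham complexes on the complement of $\Supp\lceil\Delta\rceil$ and invokes the Budur--Wang/Simpson structure theorem for cohomology jump loci of rank-one local systems on quasi-projective varieties to get torsion translates of subtori simultaneously in all degrees.

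The first bullet, however, contains a genuine gap. Your reduction hinges on realizing $\cO_X(D)$, up to a torsion twist, as a direct summand of $g_*\omega_Y$ for a cyclic cover $Y\to X$. The Esnault--Viehweg covering trick produces the summand $\omega_X\otimes L$ of $\pi_*\omega_Y$ only when $L=\cO_X(D-K_X)$ satisfies $NL\sim B$ for an effective SNC divisor $B$ all of whose multiplicities are less than $N$ (equivalently, $D-K_X\sim_{\mathbb Q}\Delta'$ with $(X,\Delta')$ klt/lc and log smooth). When $m\geq 2$ one has $D-K_X\sim_{\mathbb Q}(m-1)K_X+m\Delta$, which is a big divisor containing $(m-1)$ copies of $K_X$ and is not $\mathbb Q$-linearly equivalent to a fractional boundary; a member of $|N(D-K_X)|$ will in general have components of high multiplicity and the resulting pair will not be klt, so $\cO_X(D)$ is \emph{not} a summand of the pushforward of a canonical bundle. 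This is precisely why the $m\geq 2$ case is harder, why Shibata's statement for general $m$ is restricted to $V^0$ only, and why both his argument and the present paper's own pluricanonical arguments (Lemma \ref{klt}, Proposition \ref{pluricanonical}) take a different route: they split off $D\sim_{\mathbb Q}K_X+\Delta+(m-1)(K_X+\Delta)$, absorb the pluricanonical excess using the Iitaka fibration and a carefully chosen member of $|M(m-1)D|$ (subtracting the problematic part $V$ of the fixed divisor to preserve kltness), and combine this with Chen--Hacon--style multiplication-of-sections arguments to force the components of $V^0$ to be torsion translates. Your plan as written would only establish the first bullet for $m=1$.
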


These two results strongly imply that the pushforward of pluricanonical bundles of pairs should also admit Chen-Jiang decompositions.

The following small variant of  \cite[Corollary 2.13]{W} is also very crucial.
\begin{theo}\label{metric}
Let $f: X\rightarrow Y$ be a surjective morphism between  projective varieties. Assume that $Y$ is smooth and there exists  a $\mathbb Q$-effective divisor $\Delta$ on $X$ such that $(X, \Delta)$ is klt. Assume that for some integer $m>1$, there exists a Cartier divisor $D$ on $X$ such that $D\sim_{\mathbb Q}m(K_{X/Y}+\Delta)$ and $c_1(f_*\mathcal O_X(D))=0\in H^2(Y, \mathbb R)$, then $f_*\mathcal O_X(D)$ is a vector bundle with an Hermitian flat metric.
\end{theo}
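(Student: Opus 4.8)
The plan is to realize $\cF:=f_*\cO_X(D)$ as a torsion-free sheaf carrying a canonical singular Hermitian metric with Griffiths semi-positive curvature, coming from the positivity theory for relative pluricanonical bundles, and then to exploit the hypothesis $c_1(\cF)=0$ to force this curvature to vanish identically. This is the mechanism underlying \cite[Corollary 2.13]{W}; accordingly the real task is to check that both the metric construction and the curvature computation are insensitive to the boundary divisor $\Delta$ and to the fact that we only assume the $\mathbb Q$-linear equivalence $D\sim_{\mathbb Q}m(K_{X/Y}+\Delta)$ rather than an honest equality.

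First I would construct the metric. Because $m>1$ and $(X,\Delta)$ is klt, the fibrewise $m$-Bergman kernel construction of Berndtsson--P\u{a}un and P\u{a}un--Takayama applies to the Cartier divisor $D$: their framework is set up precisely for a $D$ that is $\mathbb Q$-linearly equivalent to an adjoint $\mathbb Q$-divisor $m(K_{X/Y}+\Delta)$ with $(X,\Delta)$ klt, and over the smooth locus of $f$ it equips $\cF$ with a singular Hermitian metric $h$ whose curvature is Griffiths semi-positive. The klt hypothesis is exactly what guarantees that the local $L^{2/m}$ weights defining $h$ are integrable, so that $h$ is nondegenerate and its curvature is a well-defined positive current. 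This already places us in the situation treated by \cite{W}, the only novelty being the presence of $\Delta$, which enters the weights but not the sign of the curvature.

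The heart of the argument is then the passage from $c_1=0$ to flatness. Griffiths semi-positivity of $(\cF,h)$ makes the first Chern current $c_1(\cF,h)$ a closed positive $(1,1)$-current, and by hypothesis its class $c_1(\cF)\in H^2(Y,\mathbb R)$ vanishes; a closed positive current lying in the zero cohomology class is identically zero, so $c_1(\cF,h)\equiv 0$. At a point where $h$ is smooth, the curvature $\sqrt{-1}\,\Theta_h$ is a Griffiths semi-positive Hermitian form whose trace over $\cF$ is exactly $c_1(\cF,h)$; since a family of positive semi-definite Hermitian forms summing to zero must consist entirely of zero forms, the vanishing of the trace forces, after a short polarization argument on the off-diagonal entries, the whole curvature tensor to vanish. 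Hence $(\cF,h)$ is flat on the locus where $h$ is smooth.

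The main obstacle is to control the singularities of $h$ and to upgrade flatness on a dense open set to a genuine Hermitian flat structure on all of $Y$, and this is exactly where \cite[Corollary 2.13]{W} does the decisive work. The key point is that the vanishing of the $c_1$-current removes the singularities of $h$: a positively curved singular metric on the determinant line bundle $\det\cF$ with zero curvature current is smooth and flat, and this regularity propagates to $\cF$. Since $\cF$ agrees with its reflexive hull outside a closed set of codimension at least two, and a Hermitian flat bundle is determined by a unitary representation of the fundamental group, the flat bundle produced on the complement extends across this set — removing a codimension $\geq 2$ subset alters neither $\pi_1(Y)$ nor the extension of the flat connection — so $\cF$ is locally free and Hermitian flat on $Y$. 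The verification in the three previous steps shows that Wang's proof is unchanged by the boundary $\Delta$ and by replacing $m(K_{X/Y}+\Delta)$ with a $\mathbb Q$-linearly equivalent Cartier divisor $D$, which is the asserted variant.
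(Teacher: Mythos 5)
Your proposal follows essentially the same route as the paper: equip $f_*\cO_X(D)$ with the semi-positively curved relative $m$-Bergman/Narasimhan--Simha metric of P\u{a}un--Takayama (using the klt condition to get $\mathcal I(h_L^{1/m})=\cO_X$), and then invoke Wang's results to pass from $c_1=0$ to a Hermitian flat, locally free sheaf. The only presentational differences are that the paper first performs an explicit log resolution to put $\Delta$ in SNC position before checking the integrability of the weights, and that it cites \cite[Theorem 1.12]{W} (after noting the $L^2$-extension property holds automatically) for the flatness step that you partially re-derive.
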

\begin{proof}
Let $\mu: X'\rightarrow X$ be a log resolution of $(X, \Delta)$. We write $$K_{X'}+\Delta'+E_1=\mu^*(K_X+\Delta)+E_2,$$ where $\Delta'$ is the strict transform of $\Delta$, $E_1$ and $E_2$ are $\mu$-exceptional effective $\mathbb Q$-divisors without common components, and $\Delta'+E_1+E_2$ has SNC support. Since $(X, \Delta)$ is klt, we have $\lfloor \Delta'+E_1\rfloor=0$. Let $E_2'=\lceil E_2\rceil -E_2$. Then $$K_{X'}+\Delta'+E_1+E_2'=\mu^*(K_X+\Delta)+\lceil E_2\rceil.$$
Note that $(X', \Delta'+E_1+E_2')$ is still klt and $\mu^*D+m\lceil E_2\rceil\sim_{\mathbb Q} m\mu^*(K_{X/Y}+\Delta)+m\lceil E_2\rceil=m(K_{X'/Y}+\Delta'+E_1+E_2'). $ Moreover, $(f\circ\mu)_*\cO_{X'}(\mu^*D+m\lceil E_2\rceil)=f_*\cO_X(D)$, whose first Chern class is $0$ in $H^2(Y, \mathbb R)$.
Hence we may simply assume that $(X, \Delta)$ is klt, log smooth and $D\sim_{\mathbb Q}m(K_X+\Delta)$.

 By \cite[Theorem 1.12]{W}, it suffices to show that $f_*\cO_X(D)$ is equipped with a semi-positively curved singular Hermitian metric $h$ satisfying the $L^2$-extension property. This can be proved with exactly the same argument of the proof of \cite[Theorem B]{W}.  Let's be more explicit.
 Set $L:= D - m K_{X/Y}$. As $D\sim_{\mathbb Q}  m(K_{X/Y}+\Delta)$, $L$ can be equipped with a singular metric $h_L$ such that $i\Theta_{h_L} (L) = m \Delta$ in the sense of current (Here, by abusing a bit the notation, the RHS is the current associated to the $\mathbb{Q}$-divisor $m \Delta$).
Then the singularity of $h_L$ is of type $\frac{1}{\prod_i |s_i|^{m a_i }}$. (Here $\Delta = \sum a_i E_i$. $s_i$ is the canonical section of the divisor $E_i$ and $a_i \in \mathbb{Q}$).
In particular, we have $\mathcal{I} (h_L ^{1/m}) = \mathcal{O}_X$ since $\Delta$ is klt.

Therefore we can apply \cite[Thm 3.36]{P}
to $(D = m K_{X/Y} +L, h_L)$. There exists thus a possibly singular metric $h_{NS}$ on $f_*\cO_X (D)$ which is semi-positively curved. The metric $h_{NS}$ satisfies automatically the $L^2$-extension property.
We can thus apply \cite[Thm 1.12]{W} to $f_* \cO_X(D)$. Then $f_* \cO_X(D)$ is hermitian flat.
 \end{proof}

 We have the vanishing theorem for klt pairs (see \cite[Corollary 10.15 and Corollary 10.16]{K}).
\begin{theo}\label{Kollar}
Let $(X, \Delta)$ be a klt pair and let $f: X\rightarrow Y$ be a surjective morphism from a smooth projective variety to a normal projective variety. Assume that $M$ is a Cartier divisor on $X$ such that $M\equiv K_X+\Delta+N$ with $N$ a  $\mathbb Q$-Cartier divisor. Then,
\begin{itemize}
\item[(1)] if $N=f^*N_Y$ for some big and nef $\mathbb Q$-Cartier divisor $N_Y$ on $Y$, $H^i(Y, R^jf_*\mathcal O_X(M))=0$ for all $i>0$ and $j\geq 0$;
 \item[(2)] if $f$ is birational and $N$ is $f$-nef, then $R^if_*\cO_X(M)=0$ for $i>0$.
\end{itemize}
\end{theo}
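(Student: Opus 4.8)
The plan is to deduce both parts from two classical results for smooth projective varieties: Kollár's package for the dualizing sheaf (torsion-freeness, the derived splitting $\mathbf{R}h_*\omega_Z\simeq\bigoplus_j R^jh_*\omega_Z[-j]$, and the vanishing $H^i(Y,R^jh_*\omega_Z\otimes L)=0$ for $i>0$ whenever $L$ is nef and big), together with Kawamata--Viehweg vanishing for $\mathbb Q$-divisors in its absolute and relative forms. Neither input refers to Theorem \ref{Kollar} itself, so the argument is not circular. The klt pair $(X,\Delta)$ is brought into the range of these $\omega$-statements by a log resolution, followed, for part (1), by a cyclic cover that absorbs the fractional boundary.

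First I would reduce to a log smooth pair. Let $\mu\colon X'\to X$ be a log resolution of $(X,\Delta)$ and write $K_{X'}=\mu^*(K_X+\Delta)+\sum_i a_iE_i$ with all $a_i>-1$ and $\sum_i E_i$ an SNC divisor. Put $\Delta'=\sum_i(\lceil a_i\rceil-a_i)E_i$, so that $(X',\Delta')$ is log smooth and klt with $\lfloor\Delta'\rfloor=0$, and $M'=\mu^*M+\sum_i\lceil a_i\rceil E_i$. The correction $\sum_i\lceil a_i\rceil E_i$ is effective and $\mu$-exceptional (only exceptional divisors with $a_i>0$ contribute, the strict transforms of the components of $\Delta$ having $a_i\in(-1,0]$ and hence $\lceil a_i\rceil=0$), so $\mu_*\cO_{X'}(M')=\cO_X(M)$; and since $M'-(K_{X'}+\Delta')\equiv\mu^*N$ is $\mu$-numerically trivial, relative Kawamata--Viehweg vanishing for the birational $\mu$ gives the local vanishing $R^{>0}\mu_*\cO_{X'}(M')=0$. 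Setting $g=f\circ\mu$, the Grothendieck spectral sequence then degenerates to $R^jg_*\cO_{X'}(M')=R^jf_*\cO_X(M)$, reducing everything to $(X',\Delta')$ log smooth with $M'\equiv K_{X'}+\Delta'+\mu^*N$.

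For part (2), $f$ is birational, hence so is $g$, and $\mu^*N$ is $g$-nef and automatically $g$-big; relative Kawamata--Viehweg vanishing for the $\mathbb Q$-divisor $\Delta'$ on the log smooth pair $(X',\Delta')$ gives $R^{>0}g_*\cO_{X'}(M')=0$, that is $R^{>0}f_*\cO_X(M)=0$. For part (1), where $N=f^*N_Y$ so that $\mu^*N=g^*N_Y$, I would absorb the fractional boundary by a cover: after a standard perturbation replacing the numerical relation by a $\mathbb Q$-linear one and clearing the denominators of $N_Y$, form the cyclic cover $\pi\colon Z\to X'$ attached to $\Delta'$ and resolve so that $Z$ is smooth. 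Writing $h=g\circ\pi$ and using that $\pi$ is finite, the covering computation together with the projection formula realizes $R^jf_*\cO_X(M)$ as a direct summand of $R^jh_*\omega_Z\otimes\cO_Y(N_Y)$.

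Part (1) then follows from Kollár's vanishing: since $N_Y$ is nef and big, $H^i(Y,R^jh_*\omega_Z\otimes\cO_Y(N_Y))=0$ for all $i>0$ and $j\geq0$, and passing to the direct summand gives $H^i(Y,R^jf_*\cO_X(M))=0$. The main obstacle is the cover in part (1): one must construct $\pi$ so that $Z$ is smooth (or has rational singularities, to be resolved without disturbing the relevant direct images) while genuinely exhibiting $R^jf_*\cO_X(M)$ as an \emph{honest} summand of $R^jh_*\omega_Z\otimes\cO_Y(N_Y)$, and one must transport the positivity of $N$---which is only numerical, and only appears after pulling back along $\mu$---faithfully through the cover, the projection formula, and the clearing of denominators of $N_Y$. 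The klt hypothesis is exactly what makes this work: it forces $\lfloor\Delta'\rfloor=0$, so that the rounding in the first step loses no sections, and it makes the fractional part of the cover contribute the trivial multiplier ideal, ensuring that the summand is $\cO_X(M)$ and not a proper subsheaf.
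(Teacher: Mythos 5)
The first thing to note is that the paper does not prove this statement at all: it is quoted verbatim from Koll\'ar's book (\cite[Corollary 10.15 and Corollary 10.16]{K}), so your attempt has to be measured against what those results actually require, not against any internal argument. Your reduction to a log smooth pair via a log resolution, with relative Kawamata--Viehweg vanishing collapsing the Leray spectral sequence, is correct, and it settles part (2) completely: after the reduction, relative KV applies directly, since relative nefness and relative bigness (the latter automatic for birational morphisms) are numerical properties, so the hypothesis $M\equiv K_X+\Delta+N$ costs nothing there. Your covering skeleton for part (1) is also the right classical strategy, and it is complete in the special case $M\sim_{\mathbb Q}K_X+\Delta+f^*N_Y$ with $N_Y$ \emph{Cartier}: then $C=M'-K_{X'}-g^*N_Y$ is an integral class with $C\sim_{\mathbb Q}\Delta'$, the cyclic cover exhibits $\cO_{X'}(K_{X'}+C)$ as a summand of $\pi_*\omega_Z$ (this is the same mechanism the paper itself uses in Proposition \ref{canonical}), and the projection formula plus nef-and-big Koll\'ar vanishing finishes.

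However, the theorem as stated has weaker hypotheses, and the two steps you defer are genuine gaps, not bookkeeping. (i) \emph{Numerical equivalence.} There is no ``standard perturbation'' turning $\equiv$ into $\sim_{\mathbb Q}$ in this relative setting. The discrepancy $P=M'-K_{X'}-\Delta'-g^*N_Y$ is a numerically trivial $\mathbb Q$-divisor class, hence up to $\sim_{\mathbb Q}$ a (generally non-torsion) element of $\Pic^\tau(X')$; it cannot be absorbed into $\Delta'$ (it is not effective), nor into $N_Y$ (being a pullback, unlike nefness and bigness, is not a numerical property), nor killed by a finite cover (it is not torsion). The absolute KV trick of replacing $N$ by the numerically equivalent class $M-K_X-\Delta$ fails precisely because the result needs the pullback structure. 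Handling $P$ requires the Koll\'ar package for $\omega_Z$ twisted by flat unitary line bundles, i.e.\ the local-system machinery of \cite{K}, Chapters 9--10 --- strictly beyond the inputs you allow yourself. (ii) \emph{$N_Y$ only $\mathbb Q$-Cartier.} Your projection-formula step needs $\cO_Y(N_Y)$ to be an honest line bundle, and ``clearing denominators'' is not available: rescaling $N_Y$ rescales the whole relation and changes $M$. Kodaira's lemma reduces to $N_Y$ ample (that part is standard), but an ample $\mathbb Q$-Cartier class need not dominate any ample Cartier class (take $N_Y=\frac{1}{1000}A$ on $Y$ with $\Pic(Y)=\mathbb Z\cdot A$), so no Cartier piece can be split off and pulled through the projection formula; in the actual proofs the positivity is instead fed into the branch divisor of the cover, and the vanishing is then extracted by an injectivity theorem rather than by a twisted vanishing applied to a direct summand. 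These two points are exactly the mathematical content that makes \cite[Corollary 10.15]{K} a theorem requiring its own chapter of machinery, and your proposal does not close them.
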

This result was partially generalized by Ambro (\cite[Theorem 3.2]{A}) and Fujino (\cite[Theorem 6.3]{F}).
%\begin{theo}\label{KV-Fujino}Let $(X, \Delta)$ be a klt pair and let $L$ be a $\mathbb Q$-Cartier Weil divisor on $X$. Assume that $L-(K_X+\Delta)$ is nef and big over $V$, where $\pi: X\rightarrow V$ is a proper morphism. Then $R^q\pi_*\mathcal O_X(L)=0$ for any $q>0$.\end{theo}
\begin{theo}\label{KV-Fujino}[Ambro-Fujino] Let $f: X\rightarrow Y$  be a morphism between projective varieties, with $X$ smooth and $Y$ of dimension $n$. Let $(X, \Delta)$ be a log canonical, log smooth $\mathbb R$-pair, and consider a line bundle $B$ on X such that $B\sim_{\mathbb R} K_X+\Delta+f^*H$, where $H$ is an ample $\mathbb R$-Cartier $\mathbb R$-divisor on $Y$. Then $$H^i(Y, R^jf_*\mathcal O_X(B))=0$$ for all $i\geq 1$ and $j\geq 0$.
\end{theo}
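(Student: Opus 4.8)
The plan is to reduce the statement to the klt case already available as Theorem~\ref{Kollar}, by an induction that strips off the components of $\Delta$ occurring with coefficient $1$ one at a time. Since the groups $H^i(Y,R^jf_*\mathcal{O}_X(B))$ depend only on the line bundle $\mathcal{O}_X(B)$, I would first pass to rational coefficients: perturb the decomposition $B\sim_{\mathbb R}K_X+\Delta+f^*H$ to one of the form $B\sim_{\mathbb Q}K_X+\Delta_0+f^*H_0$, keeping $\lfloor\Delta\rfloor$ (hence the log smooth, log canonical hypothesis with the same reduced part) fixed, moving the fractional coefficients to nearby rationals, and replacing $H$ by a nearby ample $\mathbb Q$-Cartier class; this is legitimate because the relevant conditions ($\mathbb Q$-linear equivalence to the fixed class $B-K_X$, ampleness of $H_0$, and the coefficient bounds) cut out a rational affine space in which ampleness and the strict inequalities are open, so that a rational point near the given real one exists. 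From now on all coefficients are rational, and I induct on $(\dim X,\,r)$ ordered lexicographically, where $r$ is the number of coefficient-one components of $\Delta_0$.

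For the base case $r=0$, the pair $(X,\Delta_0)$ is klt. Replacing $Y$ by the normalization of $f(X)$—a finite modification that affects neither the cohomology groups in question (finite maps have no higher direct images and are affine) nor the ampleness of $H_0$—we may assume $f$ surjective and $Y$ normal, and then $f^*H_0$ is the pullback of a big and nef $\mathbb Q$-Cartier divisor, so Theorem~\ref{Kollar}(1) gives the vanishing at once. For the inductive step ($r\geq1$), pick a component $S$ of $\lfloor\Delta_0\rfloor$, write $\Delta_0=S+\Delta'$, and consider the residue sequence
\[0\to\mathcal{O}_X(B-S)\to\mathcal{O}_X(B)\to\mathcal{O}_S(B|_S)\to0.\]
Its outer terms have exactly the right shape: on the left $B-S\sim_{\mathbb Q}K_X+\Delta'+f^*H_0$ with $(X,\Delta')$ log smooth log canonical but with only $r-1$ coefficient-one components, and on the right, by adjunction, $B|_S\sim_{\mathbb Q}K_S+\Delta'|_S+(f|_S)^*H_0$ with $(S,\Delta'|_S)$ log smooth log canonical of dimension $\dim X-1$ (note $S\not\subset\Supp\Delta'$ by log smoothness, and $(f|_S)^*H_0$ is again the pullback of an ample class). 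By the inductive hypothesis both outer terms satisfy the desired vanishing, i.e.\ $H^i(Y,R^jf_*\mathcal{O}_X(B-S))=0$ and $H^i(Y,R^j(f|_S)_*\mathcal{O}_S(B|_S))=0$ for all $i\geq1$ and $j\geq0$.

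The crux is to show that the long exact sequence of higher direct images attached to the residue sequence splits into short exact sequences
\[0\to R^jf_*\mathcal{O}_X(B-S)\to R^jf_*\mathcal{O}_X(B)\to R^j(f|_S)_*\mathcal{O}_S(B|_S)\to0,\]
equivalently that every connecting homomorphism $R^j(f|_S)_*\mathcal{O}_S(B|_S)\to R^{j+1}f_*\mathcal{O}_X(B-S)$ vanishes. Granting this, the cohomology sequence on $Y$ immediately propagates the vanishing from the two outer sheaves to the middle one, which closes the induction. The vanishing of these connecting maps is precisely where the elementary d\'evissage fails and Hodge theory must enter: the three sheaves are the graded pieces, for the weight filtration, of the direct image of the logarithmic de Rham complex $\Omega^\bullet_X(\log\lfloor\Delta_0\rfloor)$ carrying the appropriate fractional twist, and the connecting maps are morphisms of mixed Hodge structures shifting the weight. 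Their vanishing is the strictness of the weight filtration, equivalently the $E_2$-degeneration of the associated spectral sequence; this can be packaged cleanly through Saito's decomposition theorem for the underlying pure Hodge modules, or proved directly following Kollár's and Ambro--Fujino's analysis of such residue sequences. I expect this degeneration to be the one genuinely hard input; the rest is the formal induction above.
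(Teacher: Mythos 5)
The paper does not prove Theorem \ref{KV-Fujino}: it is quoted as a black box from Ambro [A, Theorem 3.2] and Fujino [F, Theorem 6.3], so there is no in-paper argument to measure yours against and the attempt must stand on its own. Your formal skeleton is the standard one and is sound as far as it goes: the perturbation to $\mathbb Q$-coefficients (the representations of $B-K_X-\lfloor\Delta\rfloor$ as $\sum a_iD_i+f^*H$ form a nonempty affine set defined over $\mathbb Q$, and the open conditions $0<a_i<1$ and ampleness of $H$ survive a small rational move), the lexicographic induction on $(\dim X, r)$, the klt base case via Theorem \ref{Kollar}(1) after replacing $Y$ by the normalization of $f(X)$, and the adjunction sequence with its two outer terms of the correct inductive shape are all correct.

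The genuine gap is exactly where you place it, but it is larger than your write-up suggests: the vanishing of the connecting maps $R^j(f|_S)_*\mathcal O_S(B|_S)\to R^{j+1}f_*\mathcal O_X(B-S)$ is not a degeneration statement one can simply quote --- it is, for practical purposes, the theorem itself. In Fujino's development the splitting of these adjunction sequences is bundled with the strict-support/torsion-freeness statement for $R^jf_*\mathcal O_X(B)$, both deduced from an injectivity theorem for snc pairs, which in turn rests on the $E_1$-degeneration of the weight-filtered spectral sequence for cohomology with compact support of the complement of the boundary, applied only after a cyclic covering absorbs the fractional part of $\Delta'$; the vanishing with the ample twist $f^*H$ is then extracted by a further argument. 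Your identification of the three sheaves with weight-graded pieces of $Rf_*\Omega^\bullet_X(\log\lfloor\Delta_0\rfloor)$ is therefore only a heuristic: because of the fractional components and the twist by $f^*H_0$ these sheaves do not literally underlie (mixed) Hodge modules without the covering constructions, and Saito's decomposition theorem governs pure, not mixed, modules, so ``strictness of the weight filtration'' is not available off the shelf. In short, what you have is a correct reduction of the Ambro--Fujino vanishing theorem to its hard Hodge-theoretic core, not a proof of it; as written the argument is circular, since the input you would need to invoke is precisely the injectivity/torsion-freeness package of which the stated theorem is a consequence.
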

\section{The proof of the main result}
\begin{prop}\label{canonical}
Let $(X, \Delta)$ be a klt pair and let $f: X\rightarrow A$ be a morphism from $X$ to an abelian variety $A$. Suppose that a Cartier divisor $D\sim_{\mathbb Q} K_X+\Delta$. Then $R^jf_*\mathcal O_X(D)$ admits Chen-Jiang decomposition for each $j\geq 0$.
\end{prop}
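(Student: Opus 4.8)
The plan is to reduce to the log smooth case, record that $R^jf_*\cO_X(D)$ is then a GV-sheaf all of whose cohomological support loci are finite unions of torsion translates of abelian subvarieties, and finally run an induction on $\dim A$ that peels off one M-regular summand at a time. First I would take a log resolution $\mu\colon X'\to X$ of $(X,\Delta)$ and, exactly as in the proof of Theorem \ref{metric}, produce a klt log smooth pair $(X',\Delta'')$ with $K_{X'}+\Delta''=\mu^*(K_X+\Delta)+\lceil E_2\rceil$, so that the Cartier divisor $D':=\mu^*D+\lceil E_2\rceil$ satisfies $D'\sim_{\mathbb Q}K_{X'}+\Delta''$ and $\mu_*\cO_{X'}(D')=\cO_X(D)$. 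Applying the birational vanishing of Theorem \ref{Kollar}(2) to $N:=D'-(K_{X'}+\Delta'')\sim_{\mathbb Q}0$ (which is $\mu$-nef) gives $R^i\mu_*\cO_{X'}(D')=0$ for $i>0$; the Leray spectral sequence for $f\circ\mu$ then degenerates and yields $R^jf_*\cO_X(D)\simeq R^j(f\circ\mu)_*\cO_{X'}(D')$ for every $j$. Hence I may assume from the outset that $(X,\Delta)$ is log smooth, so that the full strength of Theorem \ref{shibata} is available: every $V^i_j(\cO_X(D),f)$ is a finite union of torsion translates of abelian subvarieties of $\Pic^0(A)$.

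Write $\cF:=R^jf_*\cO_X(D)$. The second preparatory step is to establish the two structural inputs for the induction. That $\cF$ is GV for every $j$ I would deduce from the Ambro--Fujino vanishing of Theorem \ref{KV-Fujino} together with Hacon's criterion (\cite[Theorem 1.2]{H}): pulling back along isogenies and twisting by the ample classes supplied by $H$ produces exactly the higher cohomology vanishing required to verify the GV condition. To see that the loci $V^i(\cF)$ are themselves finite unions of torsion translates, I would invoke the Kollár--Saito-type splitting $Rf_*\cO_X(D)\simeq\bigoplus_j R^jf_*\cO_X(D)[-j]$ valid for such klt-twisted sheaves; it realises $H^i(A,\cF\otimes P)$ as a direct summand of $H^{i+j}(X,\cO_X(D)\otimes f^*P)$, whence $V^i(\cF)\subseteq V^{i+j}_1(\cO_X(D),f)$ and the torsion-translate structure of Theorem \ref{shibata} is inherited by every $V^i(\cF)$.

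With these inputs I would argue by induction on $\dim A$, the case $\dim A=0$ being trivial. If $\cF$ is M-regular we are done, taking $q=\id$. Otherwise $V^0(\cF)\neq\Pic^0(A)$, and I would choose a component of maximal dimension, necessarily of the form $Q+\widehat{B}$ with $Q\in\Pic^0(A)$ torsion and $\widehat{B}=p^*\Pic^0(B)$ for a quotient $p\colon A\to B$ with connected fibres. Relative Fourier--Mukai along $p$, applied to $\cF\otimes Q^{-1}$, produces a sheaf $\cG$ on $B$ to which the whole package of hypotheses (GV together with torsion-translate support loci) descends, so that $p^*\cG\otimes Q$ sits in a short exact sequence with $\cF$. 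By induction $\cG$, and hence $p^*\cG\otimes Q$, admits a Chen-Jiang decomposition, and it remains to split this sequence off as a genuine direct summand.

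The main obstacle is precisely this splitting: passing from a filtration whose graded pieces are pullbacks from various quotients to an honest direct sum decomposition. I expect to handle it, as in the original Chen-Jiang argument, by showing that the relevant $\Ext^1_A$ between the summand $p^*\cG\otimes Q$ and the complementary piece vanishes. Under Fourier--Mukai this $\Ext$ is computed by homomorphisms between the transformed complexes, which are supported on the respective cohomological support loci; since $Q+\widehat{B}$ was chosen of maximal dimension, these supports are transverse enough to force the $\Ext^1$ to vanish, and the extension splits. The torsion-translate structure established in the second step guarantees that each newly produced summand again satisfies the hypotheses, so the induction closes and delivers the desired decomposition $\cF\simeq\bigoplus_i q_i^*\cF_i\otimes Q_i$.
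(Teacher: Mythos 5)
Your reduction to the log smooth case is exactly the paper's first step and is fine. But from that point on you diverge from the paper and, more importantly, the core of your argument has a genuine gap. You propose to run the Chen--Jiang induction from scratch, taking as your only structural inputs that $\cF=R^jf_*\cO_X(D)$ is GV and that its cohomological support loci are finite unions of torsion translates of abelian subvarieties, and then to obtain the direct-sum splitting by an $\Ext^1$ vanishing that you assert follows because the supports are ``transverse enough.'' This is precisely the hard point of the Chen--Jiang decomposition, and it is \emph{not} a formal consequence of those two properties: a GV sheaf with torsion-translate support loci can a priori sit in a non-split extension of a pullback $p^*\cG\otimes Q$ by the complementary piece. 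In \cite{CJ} the splitting is extracted from the specific geometry of $\omega_X$ via covering tricks, and in \cite{PPS} it comes from the polarization on the underlying Hodge module; neither input is available in your sketch, and the maximality of the chosen component does not by itself kill the relevant $\Ext^1$. (Your intermediate step also quietly assumes a Koll\'ar--Saito-type splitting $Rf_*\cO_X(D)\simeq\bigoplus_j R^jf_*\cO_X(D)[-j]$ for klt-twisted sheaves, which itself is usually proved by the very covering construction you are trying to avoid.)

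The paper's proof sidesteps all of this. After the same reduction to $X$ smooth with $\Delta$ SNC and $\lfloor\Delta\rfloor=0$, it sets $C=D-K_X$, chooses $N$ minimal with $NC\sim N\Delta$, and takes a resolution $Y$ of the associated $N$-cyclic cover, so that by \cite[Proposition 9.8]{K} and duality $\cO_X(D)=\cO_X(K_X+C)$ is a direct summand of $\pi_*\omega_Y$ for the generically finite $\pi\colon Y\to X$. Hence $R^jf_*\cO_X(D)$ is a direct summand of $R^j(f\circ\pi)_*\omega_Y$, which admits a Chen--Jiang decomposition by the known smooth case (Theorem \ref{CJ-form}), and direct summands inherit Chen--Jiang decompositions by \cite[Proposition 4.6]{LPS}. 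If you want to salvage your write-up, the cleanest fix is to replace your entire induction by this cyclic covering reduction; otherwise you must supply an actual proof of the splitting step, which amounts to reproving the main theorem of \cite{PPS} in the twisted setting rather than citing it.
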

\begin{proof}
Take a log resolution $\mu: X'\rightarrow X$ such that $K_{X'}+\Delta'+E_1=\mu^*(K_X+\Delta)+E_2$, where $\Delta'$ is the strict transforms of $\Delta$, $E_1$ and $E_2$ are effective $\mu$-exceptional divisors without common components, and $\Delta'+E_1+E_2$ has SNC components. Note that $\lfloor \Delta'+E_1\rfloor=0$ since $(X, \Delta)$ is klt. Let $E_2':=\lceil E_2 \rceil-E_2$. We then have
 $$K_{X'}+\Delta'+E_1+E_2'=\mu^*(K_X+\Delta)+\lceil E_2\rceil.$$ Note that $(X', \Delta'+E_1+E_2')$ is still klt.
 Let $D':=\mu^*D+\lceil E_2\rceil$. Then $D'\sim_{\mathbb Q}K_{X'}+\Delta'+E_1+E_2'$.
 Since $\mu$ is birational, by Theorem \ref{Kollar}, we have
 \begin{eqnarray*}R\mu_*\mathcal O_{X'}(D')&=&\mu_*\mathcal O_{X'}(D')\\
 &=&\mathcal O_X(D).
 \end{eqnarray*}

Hence $R^if_*\mathcal O_X(D)=R^i(f\circ \mu)_*\mathcal O_{X'}(D')$.
Hence we may  simply assume that $X$ is a smooth projective variety and $\Delta$ has SNC support and $\lfloor\Delta \rfloor=0$.
Let $C=D-K_X$. Take the minimal positive integer $N$ such that $N\Delta$ is an effective Cartier divisor and $NC\sim N\Delta$. Take the resolution of singularities of the corresponding
$N$-cyclic cover, we get a generically finite morphism $\pi: Y\rightarrow X$. By \cite[Proposition 9.8]{K}, we know that $$\pi_*\cO_Y=\bigoplus_{k=0}^{N-1}\mathcal O_X(\lfloor k\Delta\rfloor-kC),$$ and by Grothendieck duality,  $$\pi_*\omega_Y=\omega_X\otimes\big(\bigoplus_{k=0}^{N-1}\mathcal O_X(kC-\lfloor k\Delta\rfloor)\big).$$ In particular, for $k=1$, we see that $\mathcal O_X(K_X+C)=\mathcal O_X(D)$ is a direct summand of $\pi_*\omega_Y$.
 Thus $R^i(f\circ \pi)_*\omega_Y$ has a direct summand $R^if_*\mathcal O_X(D)$. Since $R^i(f\circ \pi)_*\omega_Y$ admits Chen-Jiang decomposition, we conclude that $R^if_*\mathcal O_X(D)$ also admits  Chen-Jiang decomposition by \cite[Porposition 4.6]{LPS}.

\end{proof}
\begin{lemm}\label{klt}Let $(X, \Delta)$ be a klt pair and let $f: X\rightarrow B$ be a  morphism from $X$ to an abelian variety $B$. Suppose that a Cartier divisor $D\sim_{\mathbb Q}m(K_X+\Delta)$ for some $m\geq 2$ and the Iitaka model of $D$ dominates $f$. Then $f_*\cO_X(D)$ is IT$^0$
\end{lemm}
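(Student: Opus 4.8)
The plan is to prove the equivalent cohomological statement that $H^i(B,f_*\mathcal O_X(D)\otimes P)=0$ for every $i>0$ and every $P\in\Pic^0(B)$. First I would record two reductions. By Theorem~\ref{PS} and Remark~\ref{rema1} the sheaf $\mathcal F:=f_*\mathcal O_X(D)$ is GV; since for a GV sheaf the cohomological support loci are nested, $V^0(\mathcal F)\supseteq V^1(\mathcal F)\supseteq\cdots$, it would in principle suffice to prove $V^1(\mathcal F)=\emptyset$, although the argument below produces the vanishing in all positive degrees at once. Next, a log resolution (combined with Theorem~\ref{Kollar} to leave the relevant pushforward unchanged, exactly as in the proof of Proposition~\ref{canonical}) lets me assume that $X$ is smooth, that $\Delta$ is SNC with $\lfloor\Delta\rfloor=0$, and that the Iitaka fibration of $D$ is a morphism $g\colon X\to Z$ with $Z$ smooth; replacing $B$ by the abelian subvariety generated by $f(X)$ I may also assume $f$ is surjective. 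Because $K_B=0$ we have $K_X=K_{X/B}$, so $D\sim_{\mathbb Q}m(K_{X/B}+\Delta)$, which is precisely the shape required by Theorem~\ref{metric}.

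The hypothesis that the Iitaka model of $D$ dominates $f$ means that $f$ factors as $f=\phi\circ g$ for a dominant morphism $\phi\colon Z\to B$, and that the divisor $D_Z$ to which $D$ descends is big on $Z$, with $D\sim_{\mathbb Q}g^*D_Z+(\text{effective})$. I would use this to manufacture positivity on $B$: since $D_Z$ is big and $\phi^*H$ is nef for $H$ ample on $B$, the class $D_Z-\varepsilon\phi^*H$ is big for small $\varepsilon$, hence $\mathbb Q$-effective, and pulling back I obtain $\frac{m-1}{m}D\sim_{\mathbb Q}f^*H'+E$ with $H'$ ample on $B$ and $E\geq 0$. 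Writing $D\sim_{\mathbb Q}K_X+(\Delta+E)+f^*H'$ would then put me in position to apply the Ambro--Fujino vanishing Theorem~\ref{KV-Fujino} (note that $H'+P$ stays ample for every $P\in\Pic^0(B)$, which handles all twists simultaneously), provided the pair $(X,\Delta+E)$ is log canonical.

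The main obstacle is exactly this last proviso: the effective divisor $E$ coming from the bigness of $D_Z$ records the asymptotic fixed part of $D$ and may have coefficients $\geq 1$, so $(X,\Delta+E)$ need not be log canonical and the vanishing theorem cannot be applied naively. This is where the multiplicity $m\geq 2$ and Theorem~\ref{metric} become essential, and where I expect the real work to lie. Rather than forcing the boundary to be log canonical, I would argue by a flatness dichotomy: if $\mathcal F$ failed to be IT$^0$, then using that $\mathcal F$ is GV together with Shibata's description in Theorem~\ref{shibata} of the loci $V^i_j$ as torsion translates of subtori, one extracts a quotient $p\colon B\to B'$ with positive-dimensional fibres and a torsion twist along which a piece of $\mathcal F$ is pulled back from $B'$ with vanishing first Chern class. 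On that quotient Theorem~\ref{metric} forces the corresponding pushforward to be Hermitian flat, hence to carry no positivity in the fibre directions of $B\to B'$; but this says that $D$ is numerically trivial along those directions, so the Iitaka fibration of $D$ already factors through $X\to B'$, contradicting the assumption that the Iitaka model dominates $f$. Thus no such flat part can exist, $\mathcal F$ is IT$^0$, and the desired cohomology vanishing holds.
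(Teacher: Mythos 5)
Your setup and your diagnosis of the obstacle are both on target: the paper's proof likewise reduces to the klt log smooth case, writes $D\sim_{\mathbb Q}K_X+\Delta+\frac{1}{Mm}F_{M(m-1)}+\frac{1}{Mm}H$ with $H$ a general member of the moving part of $|M(m-1)D|$, and uses the hypothesis that the Iitaka model dominates $B$ to trade part of $H$ for $\epsilon f^*H_B$ with $H_B$ ample on $B$; and the entire difficulty is indeed that the boundary $\Delta+\frac{1}{Mm}F_{M(m-1)}$ need not be log canonical. But your proposed resolution of that difficulty, the ``flatness dichotomy,'' does not work as stated and leaves the real gap open. First, for $m\geq 2$ Theorem~\ref{shibata} only describes the loci $V^0_j(\cO_X(D),f)$; it says nothing about $V^i$ for $i>0$, so you cannot locate the alleged failure of IT$^0$ on a torsion translate of a subtorus. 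Second, ``extracting a quotient $B\to B'$ and a torsion twist along which a piece of $\mathcal F$ is pulled back from $B'$'' is essentially the Chen--Jiang decomposition of Proposition~\ref{pluricanonical}, which is proved \emph{using} this lemma; invoking such a structure here is circular. Third, Theorem~\ref{metric} requires $c_1=0$ on the whole base, and in the paper it is only applied to the restriction of the pushforward to a fiber $A_y$ of $A\to B$ after one already knows $\dim V^0=0$ there --- information you do not have at this stage. Finally, even granting a flat piece, the step ``hence $D$ is numerically trivial along those directions, so the Iitaka fibration factors through $B'$'' is asserted rather than proved.

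The paper's actual fix is elementary and quite different. One first normalizes $D$ by a torsion twist so that $h^0(X,\cO_X(D))$ is maximal among all twists by $\Pic^0(B)$ (possible by the $V^0$ part of Theorem~\ref{shibata}). Writing $\Delta=\sum_i\alpha_iV_i$ and $\frac{1}{Mm}F_{M(m-1)}=\sum_i\beta_iV_i$, one subtracts the integral divisor $V=\sum_i\lfloor\alpha_i+\beta_i\rfloor V_i$ from the boundary, so that $(X,\Delta+\frac{1}{Mm}F_{M(m-1)}-V)$ is klt and Theorem~\ref{Kollar} shows $f_*\cO_X(D-V)$ is IT$^0$. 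The key arithmetic point is that $V$ is dominated by the fixed divisor $F_D$ of $|D|$ (from $\gamma_i\geq\frac{m}{m-1}\beta_i$ and $\gamma_i$ integral one gets $\gamma_i\geq\lceil\beta_i\rceil\geq\lfloor\alpha_i+\beta_i\rfloor$ since $\alpha_i<1$), hence $h^0(X,\cO_X(D-V))=h^0(X,\cO_X(D))$. Combined with the maximality of $h^0$ and the constancy of $\chi$ for the IT$^0$ sheaf $f_*\cO_X(D-V)$, this forces the quotient $\tau=f_*\cO_X(D)/f_*\cO_X(D-V)$ to be a GV sheaf with $V^0(\tau)=\emptyset$, hence zero, so $f_*\cO_X(D)=f_*\cO_X(D-V)$ is IT$^0$. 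You would need to supply an argument of this kind, or a genuinely complete substitute, for your proposal to become a proof.
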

\begin{proof}
We may assume that $f_*\cO_X(D)\neq 0$, otherwise nothing needs to be proved. Then by Theorem \ref{PS}, $f_*\cO_X(D)$ is GV and hence $V^0(f_*\cO_X(D))\neq \emptyset$.

By Theorem \ref{shibata}, $V^0_j(\cO_X(D), f)$ is a union of torsion translates of abelian subvarieties of $\Pic^0(B)$. After replacing $D$ by $D\otimes f^*Q$ for some torsion line bundle $Q$, we may assume that $$0<h^0(X, \cO_X(D))=\max\{h^0(X, \cO_X(D)\otimes f^*P)\mid P\in \Pic^0(B)\}.$$

We first reduce this lemma to the case where $(X, \Delta)$ is klt, log smooth. We take a log resolution $\mu: X'\rightarrow X$ of $(X, \Delta)$. Write $$K_{X'}+\Delta'+E_1=\mu^*(K_X+\Delta)+E_2,$$ where $\Delta'$ is the strict transform of $\Delta$ and $E_1$ and $E_2$ are effective $\mu$-exceptional $\mathbb Q$-divisors without common components. Let $E_2'=\lceil E_2\rceil-E_2$. Then $(X', \Delta'+E_1+E_2')$  is log smooth. Since $(X, \Delta)$ is klt, $(X', \Delta'+E_1+E_2')$ is also klt. Note that $\mu^*D+m\lceil E_2\rceil\sim_{\mathbb Q}m(K_{X'}+\Delta'+E_1+E_2')$ and $$f_*\cO_X(D)=f_*\mu_*(\mu^*D+\lceil E_2\rceil).$$ Thus we may simply assume that the klt pair $(X, \Delta)$ is  log smooth.

We then take $M$ sufficiently large and divisible and write $$|M(m-1)D|=|L_{M(m-1)}|+F_{M(m-1)},$$ where $F_{M(m-1)}$ is the fixed divisor of $|M(m-1)D|$. After birational modifications, we may assume that $|L_{M(m-1)}|$ is basepoint free and induces the Iitaka fibration of  $D$, and $F_{M(m-1)}+\Delta$ has SNC support.
Let $|D|=|L_D|+F_D$ where $F_D$ is the fixed divisor of $|D|$. Then $M(m-1)F_D-F_{M(m-1)}$ is effective.

Let $H$ be a general member of $|L_{M(m-1)}|$, we then have
\begin{eqnarray*}D&&\sim_{\mathbb Q} m(K_X+\Delta)=K_X+\Delta+(m-1)(K_X+\Delta)\\&&\sim_{\mathbb Q} K_X+\Delta+\frac{1}{Mm}F_{M(m-1)}+\frac{1}{Mm}H.
\end{eqnarray*}

A priori, there is no reason that $(X, \Delta+\frac{1}{Mm}F_{M(m-1)})$ is still klt. We need to subtract some effective divisor.
We write $\Delta=\sum_{1\leq i\leq k}\alpha_i V_i$ and $\frac{1}{Mm}F_{M(m-1)}=\sum_{1\leq i\leq k}\beta_iV_i$, where $V_i$ are prime divisors, $0\leq \alpha_i< 1$, and $\beta_i\geq 0$. We may assume that $\alpha_i+\beta_i>0$ for all $1\leq i\leq k.$ Let $V:=\sum_{1\leq i\leq k}\lfloor\alpha_i+\beta_i\rfloor V_i.$ Then $$\Delta+\frac{1}{Mm}F_{M(m-1)}-V=\sum_{1\leq i\leq k}\lbrace\alpha_i+\beta_i\rbrace V_i$$ is an effective divisor and $(X, \Delta+\frac{1}{Mm}F_{M(m-1)}-V)$ is klt, log smooth. Since $H$ is a general member of a basepoint free linear system, $(X, \Delta+(\frac{1}{Mm}F_{M(m-1)}-V)+\frac{1}{Mm}H)$ is also  klt, log smooth.
Moreover, $$D-V\sim_{\mathbb Q} K_X+( \Delta+\frac{1}{Mm}F_{M(m-1)}-V)+\frac{1}{Mm}H.$$

By the assumption that the Iitaka model of $(X, D)$ dominates $B$, there  exists an ample $\mathbb Q$-divisor $H_B$ on $B$ and an effective $\mathbb Q$-divisor $H'$ such that $H-f^*H_B\sim_{\mathbb Q} H'$. We then take $\epsilon>0$ sufficiently small such that $(X,  \Delta+(\frac{1}{Mm}F_{M(m-1)}-V)+\epsilon H'+(\frac{1}{Mm}-\epsilon)H)$ is klt.
Then $$D-V\sim_{\mathbb Q} K_X+\Delta+(\frac{1}{Mm}F_{M(m-1)}-V)+\epsilon H'+(\frac{1}{Mm}-\epsilon)H+\epsilon f^*H_B.$$

By Theorem \ref{Kollar}, $f_*\cO_X(D-V)$ is an IT$^0$ sheaf on $B$.

On the other hand, we compare $V$ and $F_D$. We  write $$F_D=\sum_{1\leq i\leq k}\gamma_iV_i+F_D',$$ where $F_D'$ is also an effective integral divisor with no common components with $\Delta+F_{M(m-1)}$.  Note that $F_D$ is an integral divisor and $F_D-\frac{1}{M(m-1)}F_{M(m-1)}$ is effective.  Thus $\gamma_i\geq \frac{m}{m-1}\beta_i$ and since $\gamma_i$ is an integer, we have $\gamma_i\geq \lceil \beta_i\rceil$. Moreover, because  $0\leq \alpha_i<1$, we have $  \lceil \beta_i\rceil \geq\lfloor\alpha_i+\beta_i\rfloor$. Thus $F_D-V$ is an effective divisor. We have $H^0(X, \cO_X(D-V))\simeq H^0(X, \cO_X(D))$.

We then consider on $B$ the short exact sequence of coherent sheaves $$0\rightarrow f_*\cO_X(D-V)\rightarrow f_*\cO_X(D)\rightarrow \tau\rightarrow 0,$$
where $\tau$ is the quotient sheaf. We have shown that $f_*\cO_X(D-V)$ is IT$^0$. Moreover, by Theorem \ref{PS}, $f_*\cO_X(D)$ is GV. Since $V^i(\tau)\subset V^i(f_*\cO_X(D))\cup V^{i+1}(f_*\cO_X(D-V))$, we conclude that $\tau$ is also a GV sheaf on $B$. Moreover, since $f_*\cO_X(D-V)$ is IT$^0$, we have \begin{eqnarray*}&&h^0(B, f_*\cO_X(D-V)\otimes P)\\
&=&\chi(B,  f_*\cO_X(D-V)\otimes P)=\chi(B, f_*\cO_X(D-V))\\&=& h^0(B, f_*\cO_X(D-V))=h^0(B, \cO_X(D-V))=h^0(X, \cO_X(D)).
\end{eqnarray*}
From the maximality of $h^0(X, \cO_X(D))$, we conclude that $$H^0(B, f_*\cO_X(D-V)\otimes P)\simeq H^0(B, f_*\cO_X(D)\otimes P)$$ and thus $V^0(\tau)\subset V^1(f_*\cO_X(D-V))=\emptyset$.

We know that a GV sheaf $\tau$ with $V^0(\tau)=\emptyset$ is $0$ (see for instance \cite[Corollary 3.2]{H}). Thus $f_*\cO_X(D)=f_*\cO_X(D-V)$ is IT$^0$.
\end{proof}

\begin{prop}\label{pluricanonical}
Let $(X, \Delta)$ be a klt pair and let $f: X\rightarrow A$ be a  morphism from $X$ to an abelian variety $A$. Suppose that a Cartier divisor $D\sim_{\mathbb Q}m(K_X+\Delta)$ for some $m\geq 2$. Then there exists a quotient between abelian varieties with connected fibers $p: A\rightarrow B$, $IT^0$ sheaves $\cF_i$ on $B$, and $Q_i\in \Pic^0(A)$ torsion line bundles, for $1\leq i\leq N$; such that $$f_*\mathcal O_X(D)=\bigoplus_{1\leq i\leq N}p^*\cF_i\otimes Q_i.$$
\end{prop}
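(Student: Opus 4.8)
The plan is to follow the strategy behind part (2) of Theorem~\ref{CJ-form} in \cite{LPS}: use the Iitaka fibration of $D$ to manufacture a \emph{single} quotient $p\colon A\to B$, show via Lemma~\ref{klt} that the pushforward to $B$ is IT$^0$, and then prove that $f_*\mathcal O_X(D)$ is summand-by-summand pulled back from $B$ up to a torsion twist. First I would make the standard reductions. As in Proposition~\ref{canonical} and Lemma~\ref{klt}, a log resolution lets me assume $(X,\Delta)$ is klt and log smooth. If $f_*\mathcal O_X(D)=0$ there is nothing to prove, so I assume it is nonzero; since it is GV by Theorem~\ref{PS}, we have $V^0(f_*\mathcal O_X(D))\neq\emptyset$ and in particular $\kappa(X,D)\ge 0$, so the Iitaka fibration $\phi\colon X\to Z$ of $D$ exists as a morphism after a further modification, with $D\sim_{\mathbb Q}\phi^*D_Z+(\text{boundary})$ for a big semiample $D_Z$ on $Z$.

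Next I would build the quotient. Let $A_1\subseteq A$ be the abelian subvariety generated by the differences $f(x)-f(y)$ for $x,y$ in a common fibre of $\phi$; equivalently, a general fibre $F$ of $\phi$ has $f(F)$ equal to a full translate of $A_1$. Set $B:=A/A_1$ and $p\colon A\to B$. By construction $p\circ f$ contracts the fibres of $\phi$, so it factors as $p\circ f=g\circ\phi$ with $g\colon Z\to B$, and the Iitaka model of $D$ dominates $p\circ f$. Hence Lemma~\ref{klt} applies to $p\circ f\colon X\to B$ and gives that $p_*f_*\mathcal O_X(D)=(p\circ f)_*\mathcal O_X(D)$ is IT$^0$ on $B$. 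The crucial feature is that $A_1$, and hence $B$, is \emph{unchanged} if one replaces $D$ by $D\otimes f^*Q$ for a torsion $Q\in\Pic^0(A)$, because such a twist has the same Iitaka fibration; this is what will force a single quotient at the end.

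Then comes the descent, which I expect to be \textbf{the main obstacle}. On a general fibre $A_b=p^{-1}(b)$ (a translate of $A_1$) put $X_b=f^{-1}(A_b)=\phi^{-1}(Z_b)$ with $Z_b=g^{-1}(b)$. By construction every Iitaka fibre inside $X_b$ surjects onto $A_b$, while all the positivity of $D|_{X_b}$ is pulled back from $Z_b$ and so is ``transverse'' to $A_b$; the point to verify is that this forces $c_1\big((f_*\mathcal O_X(D))|_{A_b}\big)=0$. Granting this, by cohomology and base change $(f_*\mathcal O_X(D))|_{A_b}=(f_b)_*\mathcal O_{X_b}(D|_{X_b})$ with $D|_{X_b}\sim_{\mathbb Q}m(K_{X_b/A_b}+\Delta|_{X_b})$, $m\ge 2$, and $(X_b,\Delta|_{X_b})$ klt, so Theorem~\ref{metric} applies and shows this restriction is Hermitian flat, i.e.\ a direct sum of flat line bundles in $\Pic^0(A_b)$. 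Carrying out the vanishing of $c_1$ on the fibres, and then upgrading this fibrewise flatness into a genuine global statement that $f_*\mathcal O_X(D)$ is locally constant along the fibres of $p$, is the technical heart; this is precisely where the hypothesis $m\ge 2$ is consumed, through Theorem~\ref{metric}.

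Finally I would assemble the decomposition. After twisting by a torsion line bundle I may assume $h^0(X,\mathcal O_X(D))$ is maximal among twists; by Theorem~\ref{shibata} each $V^0_j(\mathcal O_X(D),f)$ is a finite union of torsion translates of abelian subvarieties, and the torsion-invariance of $B$ forces every such component to be a translate $Q_i^{-1}+\Pic^0(B)$ of the one fixed subtorus $\Pic^0(B)$. The fibrewise flatness guarantees that, for the trivial character, the corresponding summand of $f_*\mathcal O_X(D)\otimes Q_i$ is pulled back from $B$, and it is IT$^0$ there by Lemma~\ref{klt}. Peeling off these summands one component at a time, exactly as in the decomposition argument of \cite{LPS} and using that a GV sheaf $\tau$ with $V^0(\tau)=\emptyset$ vanishes, yields $f_*\mathcal O_X(D)=\bigoplus_{1\le i\le N}p^*\mathcal F_i\otimes Q_i$ with $\mathcal F_i$ IT$^0$ on $B$ and $Q_i$ torsion, all relative to the single quotient $p\colon A\to B$.
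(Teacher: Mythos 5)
Your proposal follows the same skeleton as the paper's proof (Iitaka fibration $\Rightarrow$ quotient $p\colon A\to B=A/K$ $\Rightarrow$ Lemma~\ref{klt} for IT$^0$ $\Rightarrow$ fibrewise flatness via Theorem~\ref{metric} $\Rightarrow$ assembly using ``GV with empty $V^0$ is zero''), but the step you yourself flag as the technical heart --- that $c_1\big((f_*\mathcal O_X(D))|_{A_y}\big)=0$ --- is left unproven, and the heuristic you offer for it (the positivity of $D|_{X_y}$ is ``transverse'' to $A_y$) is not how it can be closed. The paper obtains it by generic vanishing theory, not by a direct Chern class computation: one first shows that \emph{every} irreducible component $-Q_i+B_i$ of $V^0(f_*\mathcal O_X(D))$ has $B_i$ contained in (a translate of) $\Pic^0(B)$. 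Granting that, the restriction $f_{y*}\mathcal O_{X_y}(D_y)$ is a GV sheaf (Theorem~\ref{PS} and Remark~\ref{rema1}) whose $V^0$ is the \emph{finite} set $\{-Q_i|_{A_y}\}$, and a GV sheaf with zero-dimensional $V^0$ is automatically a flat vector bundle with $c_1=0$ by Hacon's results (\cite[Corollary 3.2]{H}). Only at that point does Theorem~\ref{metric} (where $m\ge 2$ is used) upgrade flatness to Hermitian flatness, hence to a direct sum of torsion line bundles.

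This exposes a second, related gap: your ``torsion-invariance of $B$'' remark does not prove the containment $B_i\subseteq\Pic^0(B)$. What Lemma~\ref{klt} gives (IT$^0$ and nonzero, hence $V^0=\Pic^0(B)$) is only the inclusion $-Q_i+\Pic^0(B)\subseteq V^0(f_*\mathcal O_X(D))$, i.e.\ the components are \emph{at least} as big as translates of $\Pic^0(B)$; the needed upper bound is the content of the Chen--Hacon argument (\cite[Lemma 2.2]{CH}) reproduced in the paper: if the image of $B_i$ in $\Pic^0(K)$ were positive-dimensional, restricting to a general Iitaka fibre $F$ (whose image $f(F)$ is a translate of $K$, so that $f^*$ factors through $\Pic^0(K)$ by an isogeny onto its image) would produce a positive-dimensional family of twists of $\mathcal O_F(D|_F)$ with sections, forcing $h^0(F,\mathcal O_F(MD|_F))>1$ and contradicting $\kappa(F,D|_F)=0$. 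Without this, the restriction of $V^0$ to a fibre $A_y$ could be positive-dimensional, and both your $c_1=0$ claim and the final peeling-off argument (injectivity of $\bigoplus_i p^*\mathcal F_i\otimes Q_i\to f_*\mathcal O_X(D)$ via a rank count on fibres, and vanishing of the GV quotient via the $R^1p_*$ torsion-freeness argument) would collapse. So the missing $V^0$ structure result is precisely what fills the gap you acknowledged; supplying it would essentially reconstruct the paper's proof.
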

\begin{proof}
Let $g: X\dashrightarrow Y$ be the Iitaka fibration of $(X, D)$. After birational modifications, we may assume that $g$ is a morphism and let $F$ be a general fiber of $g$. Then $(F, \Delta|_F)$ is also a klt pair and $\kappa(F, D|_F)=\kappa(F, (K_F+\Delta|_F))=0$. Hence $f(F)$ is a translate of a fixed abelian subvariety $K$ of $A$ (see for instance \cite[Proposition 4.5]{WY}). Let $p: A\rightarrow B=A/K$ be the quotient. We have a commutative diagram:
\begin{eqnarray*}
\xymatrix{
X\ar[r]^{f}\ar[d]^g & A\ar[d]^p\\
Y\ar[r]^{h} & B.}
\end{eqnarray*}
By Theorem \ref{shibata}, there exists $Q_i$, $1\leq i\leq N$ torsion line bundles on $A$ such that $V^0(f_*\mathcal O_X(D))=\bigcup_{1\leq i\leq N}(-Q_i+B_i)$, where each $B_i$ is an abelian subvariety of $\Pic^0(A)$ and $-Q_i+B_i$, $1\leq i\leq N$ are   irreducible components of $V^0(f_*\mathcal O_X(D))$.

We claim that each $B_i$ is contained in a translate of $\Pic^0(B)$.

It is basically the context of \cite[Lemma 2.2]{CH}. Let's recall the argument here. Let $p^*$ be the dual of $p$ and consider the quotient morphism  $$\Pic^0(B)\hookrightarrow \Pic^0(A)\xrightarrow{\pi} \Pic^0(A)/\Pic^0(B)=\Pic^0(K).$$ It suffices to show that $\pi(B_i)$ is the origin of $\Pic^0(K)$ for each $1\leq i\leq N$. Assume the contrary,  $\pi(B_i)$ is a positive dimensional abelian subvariety of $\Pic^0(K)$. Since $H^0(X, \cO_X(D)\otimes f^*(Q_i^{-1}\otimes P))\neq 0$ for any $P\in B_i$,   we  have $$H^0(F, \cO_F(D|_F)\otimes   f^*(Q_i^{-1}\otimes P)|_F)\neq 0.$$ Moreover, the image of  $f|_F: F\rightarrow A$ is a translate of $K$, thus $f^*: \Pic^0(A)\rightarrow  \Pic^0(F)$ factors as
 $$\Pic^0(A)\xrightarrow{\pi} \Pic^0(K)\xrightarrow{f|_F^*} \Pic^0(F),$$ and $\Pic^0(K)\xrightarrow{f|_F^*} \Pic^0(F)$ is an isogeny onto its image. Therefore, there exists a positive dimensional abelian subvariety $K'$ of $\Pic^0(F),$ such that for any $Q\in K'$, we have $H^0(F, \cO_F(D|_F)\otimes f^*Q_i^{-1}|_F\otimes Q)\neq 0$. Let $M$ be the order of $Q_i\in \Pic^0(A)$. Then by a standard argument, we conclude that $h^0(F, \cO_F(MD|_F))>1$, which contradicts the fact that $\kappa(F, D|_F)=0$.\\

For any torsion line bundle $P\in V^0(f_*\mathcal O_X(D))$, $p_*(f_*\mathcal O_X(D)\otimes P)$ is non-trivial. By  Lemma \ref{klt},
$\cF_P:=p_*(f_*\mathcal O_X(D)\otimes P)$ is IT$^0$ on $B$.

Thus $V^0(f_*\mathcal O_X(D))$ is a disjoint union of torsion translates of $\Pic^0(B)$ and we may write $V^0(f_*\mathcal O_X(D))=\bigcup_{1\leq i\leq N}(-Q_i+\Pic^0(B))$ for some torsion line bundles $Q_i\in \Pic^0(A)$. Here we assume that $-Q_i+\Pic^0(B)$, $1\leq i\leq N$, are connected components of $V^0(f_*\cO_X(D))$.

Let $\cF_i:=p_*(f_*\mathcal O_X(D)\otimes Q_i^{-1})$ be the corresponding IT$^0$ sheaf on $B$. We now consider the natural map
\begin{eqnarray}
\phi: \cF:=\bigoplus_{1\leq i\leq N}p^*\cF_i\otimes Q_i\rightarrow f_*\mathcal O_X(D).
\end{eqnarray}
From the construction, we know that for any $P\in\Pic^0(A)$, $$H^0(A, \cF\otimes P)\xrightarrow{\phi}H^0(A, f_*\mathcal O_X(D)\otimes P)$$ is an isomorphism.

Moreover, we claim that $\varphi$ is injective.

 First, we show that $$f_*\mathcal O_X(D)\mid_{A_y}\simeq \bigoplus_{1\leq i\leq N}Q_i^{\oplus a_i}\mid_{A_y},$$ for $y\in B$ a general point of $(p\circ f)(X)$ and $A_y$ the corresponding fiber of $p$ over $y$. Let $X_y$ be the fiber of $p\circ f$ over $y$ and let $f_y: X_y\rightarrow A_y$ be the corresponding morphism.
By base change we know that $f_*\mathcal O_X(D)|_{A_y}$ is isomorphic to $f_{y*}\mathcal O_{X_y}(D_y)$. Since $(X_y, \Delta|_{X_y})$ is klt and $D|_{X_y}\sim_{\mathbb Q}m(K_{X_y}+\Delta|_{X_y})$, $f_{y*}\mathcal O_{X_y}(D_y)$ is a GV-sheaf by Theorem \ref{PS} and Remark \ref{rema1}. Moreover, by Lemma \ref{klt}, we see that $V^0(f_{y*}\cO_{X_y}(D_y))=\{-Q_i|_{A_y}\mid 1\leq i\leq N\}$.
Hence $\dim V^0(f_{y*}\mathcal O_{X_y}(D_y))=0$, thus $f_{y*}\mathcal O_{X_y}(D_y)$ is a flat vector bundle with $c_1(f_{y*}\mathcal O_{X_y}(D_y))=0$ (see for instance \cite[Corollary 3.2]{H}).
By Theorem \ref{metric},  $f_{y*}\mathcal O_{X_y}(D_y)$ is a direct sum of torsion line bundles. We then conclude from the structure of $V^0(f_*\mathcal O_X(D))$ that there exists $a_i\in \mathbb N$ such that  $$f_*\mathcal O_X(D)\mid_{A_y}\simeq \bigoplus_{1\leq i\leq N}Q_i^{\oplus a_i}\mid_{A_y}.$$ Thus $\mathrm{rk} (\cF_i)=a_i$ and $\phi$ is an injective map.

Let $\cQ:=f_*\mathcal O_X(D)/\cF$. We have the short exact sequence $$0\rightarrow \cF\rightarrow f_*\mathcal O_X(D)\rightarrow \cQ\rightarrow 0.$$ Since both $\cF$ and $f_*\mathcal O_X(D)$ are GV sheaves, so is $\cQ$. Moreover, $V^0(\cQ)\subset V^1(\cF)\cup V^0(f_*\mathcal O_X(D))$. Since $\cF$ is GV, we know that $V^1(\cF)\subset V^0(\cF)$ (see for instance \cite[Corollary 3.2]{H}). Then   $V^0(\cQ)\subset V^0(\cF)=\bigcup_{1\leq i\leq N}(-Q_i+\Pic^0(B))$.
 For all $Q\in V^0(\cF)$, we know that $p_*(\cF\otimes Q)=p_*(f_*\mathcal O_X(D)\otimes Q)$. Hence we have the exact sequence $$0\rightarrow p_*(\cQ\otimes Q)\rightarrow R^1p_*(\cF\otimes Q)\rightarrow R^1p_*(f_*\mathcal O_X(D)\otimes Q).$$ We may assume that $Q=-Q_1+Q_2\in -Q_1+\Pic^0(B)$, where $Q_2\in \Pic^0(B)$, then $R^1p_*(\cF\otimes Q)=(\cF_1\otimes Q_2)^{\oplus h^1(\mathcal O_K)}$ is torsion free on its support. Since the map $$R^1p_*(\cF\otimes Q)\rightarrow R^1p_*(f_*\mathcal O_X(D)\otimes Q)$$ is isomorphism on the generic point of their support, we conclude that this map is injective. Hence $p_*(\cQ\otimes Q)=0$ for all $Q\in V^0(\cF)$. Thus $V^0(\cQ)=\emptyset$ and hence $\cQ=0$.
 \end{proof}

\begin{qu}
Does Chen-Jiang decompositions hold for pushforwards to abelian varieties of (pluri-)canonical bundles of log canonical pairs?
\end{qu}

We have the vanishing result Theorem \ref{KV-Fujino} for log canonical pairs. Thus Lemma \ref{klt} may still be true under some extra assumptions about the positivity of $D$. However, it seems difficult to extend Theorem \ref{metric} to the log canonical cases.

\section{A geometric application}

\begin{prop}\label{full}Let $(X, \Delta)$ be a klt pair. Assume that a big  Cartier  divisor $D\sim_{\mathbb Q}K_X+\Delta$ and there exists a morphism $f: X\rightarrow A$ which is generically finite onto its image. Then $V^0(f_*\mathcal O_X(D))$ generates $\Pic^0(A)$.
\end{prop}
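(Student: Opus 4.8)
The plan is to show that the subgroup generated by $V^0(f_*\cO_X(D))$ is all of $\Pic^0(A)$ by producing enough torsion translates of abelian subvarieties inside $V^0$ and arguing that their span cannot be a proper subgroup. By Theorem \ref{shibata}, $V^0(f_*\cO_X(D))$ is a finite union of torsion translates $-Q_i+B_i$ of abelian subvarieties $B_i\subset\Pic^0(A)$. The subgroup generated by $V^0(f_*\cO_X(D))$ is then a closed subgroup whose identity component is an abelian subvariety $\widehat B\subset\Pic^0(A)$ containing all the $B_i$; by duality $\widehat B=\Pic^0(B)$ for some quotient $p:A\to B$ with connected fibers, and I want to prove $B=A$. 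Suppose not, so $\dim B<\dim A$ and the kernel $K=\ker p$ is a nontrivial abelian subvariety.

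First I would reduce to understanding why $V^0$ cannot be trapped in (translates of) a proper $\Pic^0(B)$. The key point is that $D$ is \emph{big} and $f$ is generically finite onto its image, so $f_*\cO_X(D)$ is a generically finite pushforward of a big line bundle; in particular it has positive generic rank and $h^0$ grows maximally. The strategy is to restrict to a general fiber $A_y$ of $p$ and its preimage. Writing $X_y$ for the fiber of $p\circ f$ over a general $y\in B$ and $f_y:X_y\to A_y$ the induced map, base change gives $f_*\cO_X(D)|_{A_y}\simeq f_{y*}\cO_{X_y}(D_y)$ where $D_y\sim_{\Q}K_{X_y}+\Delta|_{X_y}$ by adjunction (the relative canonical differs by a trivial bundle since $B$ is an abelian variety). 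Because $D$ is big and $f$ is generically finite, $f_y$ is still generically finite onto its image and $D_y$ remains big on $X_y$ for general $y$; hence $f_{y*}\cO_{X_y}(D_y)$ is a nonzero GV sheaf on $A_y$ (Theorem \ref{PS}, Remark \ref{rema1}). Therefore $V^0(f_{y*}\cO_{X_y}(D_y))\neq\emptyset$ by Hacon's result.

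The heart of the argument is then a descent/semicontinuity step: if $V^0(f_*\cO_X(D))$ were contained in finitely many translates of the proper subtorus $\Pic^0(B)$, then every cohomology class detected on a general fiber $A_y$ would have to come from $\Pic^0(B)$, i.e.\ be trivial on the fiber direction $\Pic^0(K)=\Pic^0(A)/\Pic^0(B)$. Concretely, an element of $V^0(f_{y*}\cO_{X_y}(D_y))$ corresponds, via the isogeny $\Pic^0(K)\to\Pic^0(A_y)$, to a nonzero class, contradicting the assumption that $V^0$ lies over a single point of $\Pic^0(K)$ after translation. This forces $\Pic^0(K)$ to be trivial, i.e.\ $K=0$ and $B=A$, so $V^0(f_*\cO_X(D))$ generates $\Pic^0(A)$.

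I expect the main obstacle to be the base-change and bigness-preservation step for the general fiber: one must check carefully that $D_y$ is big on $X_y$ (not merely pseudoeffective) and that $f_y$ remains generically finite, since bigness of $D$ only guarantees bigness on a general fiber after controlling how $\kappa$ interacts with the fibration $p\circ f$; handling the possibility that $f(X)$ itself is fibered by translates of $K$, which would make $D_y$ fail to be big, is the delicate case and likely requires invoking generic finiteness together with a dimension count to rule it out.
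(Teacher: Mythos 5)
Your reduction to a general fiber $X_y$ of $p\circ f$ is the right opening move, but the concluding step is not a valid contradiction, and that is where the real work lies. You argue that $f_{y*}\cO_{X_y}(D_y)$ is a nonzero GV sheaf, hence $V^0(f_{y*}\cO_{X_y}(D_y))\neq\emptyset$, and that an element of this $V^0$ gives a ``nonzero class'' in $\Pic^0(K)$ contradicting the containment of $V^0(f_*\cO_X(D))$ in translates of $\Pic^0(B)$. There is no reason for that class to be nonzero: the trivial bundle $\cO_{A_y}$ lies in $V^0(f_{y*}\cO_{X_y}(D_y))$ as soon as $H^0(X_y,\cO_{X_y}(D_y))\neq 0$, and this is entirely consistent with $V^0(f_*\cO_X(D))\subset\bigcup_i(-Q_i+\Pic^0(B))$ --- the images of the $-Q_i$ in $\Pic^0(K)$ are finitely many torsion points, which is exactly what $V^0$ of the fiber sheaf is expected to be. So non-emptiness of $V^0$ on the fiber detects nothing, and the bigness of $D_y$, which you correctly flag as needing care, is in fact never used to produce a contradiction.

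The paper closes this gap by invoking the Chen--Jiang decomposition (Proposition \ref{canonical}), which your argument never uses. Since every M-regular summand of $f_*\cO_X(D)$ lives on a quotient whose $\Pic^0$ sits inside $\PB=\Pic^0(B)$, the restriction $f_{y*}\cO_{X_y}(D|_{X_y})$ is a direct sum of torsion line bundles on $K$; after an \'etale cover making $f_y$ primitive it becomes $\cO_K$, which forces $f_y:X_y\rightarrow K$ to be birational and gives $H^{\dim X_y}(X_y,\cO_{X_y}(D|_{X_y}))\neq 0$. Since $X_y$ is klt, hence Cohen--Macaulay, Serre duality turns this non-vanishing into a nonzero map $\cO_{X_y}(D|_{X_y})\rightarrow\cO_{X_y}(K_{X_y})$, forcing $\Delta|_{X_y}=0$ and $D|_{X_y}\sim K_{X_y}$; bigness of $D|_{X_y}$ then makes $K_{X_y}$ big, which is impossible for a variety birational to the abelian variety $K$. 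It is this interplay of the decomposition, duality, and bigness of the canonical class --- not a semicontinuity statement about $V^0$ --- that yields the contradiction, so your proposal as written has a genuine gap at its final step.
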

\begin{proof}
This is a direct generalization of Chen and Hacon's theorem (see \cite{CH}) in the smooth case. The same argument works here. Assume that $V^0(f_*\mathcal O_X(D))$ generates an abelian subvariety $\PB$ of $\Pic^0(A)$. Considering the morphisms:
\begin{eqnarray*}
\xymatrix{
X\ar[dr]_g\ar[r]^f & A\ar[d]^p\\
& B,}
\end{eqnarray*}
where $p$ is the dual morphism of the inclusion $\PB\hookrightarrow \PA$ and $g=p\circ f$. Let $X_y$ be a connected component of a general fiber of $g$ and let $K$ be the kernel of $p$ and let $f_y: X_y\rightarrow K$ be the corresponding morphism. Then by Proposition \ref{canonical} and base change, $f_{y*}\mathcal O_{X_y}(D|_{X_y})$ is a direct sum of torsion line bundles. After \'etale cover of $K$ and \'etale base change, we may assume that $f_y$ is primitive, i.e. $f_y^*: \Pic^0(K)\rightarrow \Pic^0(X_y)$ is injective. Then $f_{y*}\mathcal O_{X_y}(D|_{X_y})=\mathcal O_{K}$ and hence $f_y: X_y\rightarrow K$ is birational. We also have $H^{\dim X_y}(X_y, \mathcal O_{X_y}(D|_{X_y}))=H^{\dim X_y}(K, \cO_K)\neq 0$.

Since $X_y$ is a connected component of a general fiber of $g$, $(X_y, \Delta|_{X_y})$ is still klt. Hence $X_y$ has rational singularities and in particular $X_y$ is Cohen-Macaulay (see for instance \cite[Theorem 5.22]{KM}).
Hence $0\neq H^{\dim X_y}(X_y, \mathcal O_{X_y}(D|_{X_y}))\simeq \mathrm{Hom}(\mathcal O_{X_y}(D|_{X_y}), \cO_{X_y}(K_{X_y}))^{\vee}$. Since $D|_{X_y}\sim_{\mathbb Q} K_{X_y}+\Delta|_{X_y}$, we conclude that $\Delta|_{X_y}=0$ and $K_{X_y}\sim D|_{X_y} $ is Cartier.
Note that $D$ is big on $X$ and the family of $X_y$ covers $X$, hence $D|_{X_y}$ is also big and so is $K_{X_y}$. This is a contradiction since we have shown that $f_y: X_y\rightarrow K$ is birational.
\end{proof}

\subsection*{Acknowledgements}
The author started to think about M-regular decomposition for klt pairs after a discussion with Mihnea Popa and he is grateful to Mihnea for asking this interesting question.  The author is  grateful to Junyan Cao for pointing out the reference \cite{W} and many stimulating conversations. The author   thanks Fanjun Meng for sharing his preprint \cite{M}, thanks Giuseppe Pareschi for some interesting comments. Finally, the author is grateful to an anonymous referee for his/her careful reading.

\end{document}